\definecolor{verylight}{gray}{0.97}
\definecolor{light}{gray}{0.9}
\definecolor{medium}{gray}{0.85}
\definecolor{dark}{gray}{0.6}
\def\frk{\frak}               % font for "Fraktur"
\def\mm{{\frk m}}
\def\Phi{{\frk n}}
\def\Phi{{\frk N}}
\def\MA{{\mathcal A}}
\def\xb{{\bold x}}
\def\opn#1#2{\def#1{\operatorname{#2}}} % to make operators
\opn\chara{char} \opn\length{\ell} \opn\pd{pd} \opn\rk{rk}
\opn\projdim{proj\,dim} \opn\injdim{inj\,dim} \opn\rank{rank}
\opn\depth{depth} \opn\grade{grade} \opn\height{height}
\opn\embdim{emb\,dim} \opn\codim{codim}
\opn\Tr{Tr} \opn\bigrank{big\,rank}
\opn\superheight{superheight}\opn\lcm{lcm}
\opn\trdeg{tr\,deg}%\emph{
\opn\reg{reg} \opn\lreg{lreg} \opn\ini{in} \opn\lpd{lpd}
\opn\size{size}\opn\bigsize{bigsize}
\opn\cosize{cosize}\opn\bigcosize{bigcosize}
\opn\sdepth{sdepth}\opn\sreg{sreg}
\opn\link{link}\opn\fdepth{fdepth}
\opn\rank{rank}
\opn\Deg{Deg}
\opn\msupp{msupp}
\opn\div{div} \opn\Div{Div} \opn\cl{cl} \opn\Cl{Cl}
\let\epsilon\varepsilon
\let\phi=\varphi
\let\kappa=\varkappa
\opn\Spec{Spec} \opn\Supp{Supp} \opn\supp{supp} \opn\Sing{Sing}
\opn\Ass{Ass} \opn\Min{Min}\opn\Mon{Mon} \opn\dstab{dstab} \opn\astab{astab}
\opn\Syz{Syz}
\opn\Ann{Ann} \opn\Rad{Rad} \opn\Soc{Soc}
\opn\Im{Im} \opn\Ker{Ker} \opn\Coker{Coker} \opn\Am{Am}
\opn\Hom{Hom} \opn\Tor{Tor} \opn\Ext{Ext} \opn\End{End}
\opn\Aut{Aut} \opn\id{id}
\opn\nat{nat}
\opn\pff{pf}%   \pf exists already
\opn\Pf{Pf} \opn\GL{GL} \opn\SL{SL} \opn\mod{mod} \opn\ord{ord}
\opn\Gin{Gin} \opn\Hilb{Hilb}\opn\sort{sort}
\opn\initial{init}
\opn\ende{end}
\opn\height{height}
\opn\type{type}
\opn\set{set}
\opn\aff{aff} \opn\con{conv} \opn\relint{relint} \opn\st{st}
\opn\lk{lk} \opn\cn{cn} \opn\core{core} \opn\vol{vol}
\opn\link{link} \opn\star{star}\opn\lex{lex}
\opn\gr{gr}
\def\pot#1#2{#1[\kern-0.28ex[#2]\kern-0.28ex]}
\opn\dirlim{\underrightarrow{\lim}}
\opn\inivlim{\underleftarrow{\lim}}
\let\union=\cup
\let\sect=\cap
\let\Union=\bigcup
\let\Dirsum=\bigoplus
\let\to=\rightarrow
\let\To=\longrightarrow
\def\Implies{\ifmmode\Longrightarrow \else
        \unskip${}\Longrightarrow{}$\ignorespaces\fi}
\def\implies{\ifmmode\Rightarrow \else
        \unskip${}\Rightarrow{}$\ignorespaces\fi}
\def\iff{\ifmmode\Longleftrightarrow \else
        \unskip${}\Longleftrightarrow{}$\ignorespaces\fi}
 \theoremstyle{plain}
\newtheorem{Theorem}{Theorem}[section]
 \newtheorem{Lemma}[Theorem]{Lemma}
 \newtheorem{Corollary}[Theorem]{Corollary}
 \newtheorem{Proposition}[Theorem]{Proposition}
 \theoremstyle{definition}
 \newtheorem{Definition}[Theorem]{Definition}
 \newtheorem{Examples}[Theorem]{Examples}
\let\epsilon\varepsilon
\let\kappa=\varkappa
\opn\dis{dis}
\def\pnt{{\raise0.5mm\hbox{\large\bf.}}}
\opn\Lex{Lex}
\begin{document}
\title{Restricted classes of Veronese type ideals and algebras}
\author {Rodica Dinu, J\"urgen Herzog,  Ayesha Asloob Qureshi}

\address{Rodica Dinu, Faculty of Mathematics and Computer Science, University of Bucharest, Str. Academiei 14, 010014 Bucharest, Romania}
\email{rdinu@fmi.unibuc.ro}

\address{J\"urgen Herzog, Fachbereich Mathematik, Universit\"at Duisburg-Essen, Fakult\"at f\"ur Mathematik, 45117 Essen, Germany} \email{juergen.herzog@uni-essen.de}

\address{Ayesha Asloob Qureshi, Sabanc\i \; University, Faculty of Engineering and Natural Sciences, Orta Mahalle, Tuzla 34956, Istanbul, Turkey}\email{aqureshi@sabanciuniv.edu}

%\thanks{This paper was written while the second author visited Faculty of Engineering and Natural Sciences at Sabanci University.}

\begin{abstract}
We study ideals  which are  generated by   monomials of degree $d$ in the polynomial ring in $n$ variables and  which satisfy certain numerical side conditions regarding their exponents. Typical examples of such ideals are the ideals of Veronese type, squarefree Veronese ideals or $t$-spread Veronese ideals.   In this paper we focus on $c$-bounded  $t$-spread Veronese ideals and on Veronese ideals of bounded support.  Their powers as well as their fiber cone is also  be considered.
\end{abstract}

\thanks{This paper was written during the visit of the first author at the Universit\" at Duisburg-Essen.}
\subjclass[2010]{05E40, 13C14, 13D02}
\keywords{c-bounded t-spread Veronese ideals, analytic spread, t-spread Veronese ideals of bounded type}

\maketitle
\section*{Introduction}

The study of Veronese ideals and Veronese algebras, and variations of them,  has a long history. The motivation to study these algebraic objects comes from combinatorics and algebraic geometry. In this paper we aim at a unified treatment,  and then focus on two particular classes  of Veronese type ideals and algebras.

Let $K$ be a field and $S=K[x_1, \dots, x_n]$ be the polynomial ring in $n$ indeterminates over $K$.
We fix an integer $d$ and a sequence $\mathbf{a}=(a_1,\dots, a_n)$ of integers with $1\leq a_1\leq \dots \leq a_n \leq d$ and $d<\sum_{i=1}^{d}a_i$. The monomial ideal in $S$ generated by all the monomials of the form $x_1^{c_1}\cdots x_n^{c_n}$ with $c_1+\dots +c_n=d$ and with $c_i\leq a_i$ for all $1\leq i\leq n$ is called an {\it ideal of Veronese type}. We denote it by $I_{n,d, (a_1,\dots, a_n)}$. Properties of these ideals have been  studied for example in \cite{HHV}, \cite{HRV} and  \cite{assocprim}. The $K$-subalgebra of $S$ generated by the monomials $u\in G(I_{n,d,(a_1,\dots,a_n)})$ is called an  {\it algebra of Veronese type}. Here, for a monomial  ideal $I$, we denote by $G(I)$ the set of minimal monomial  generators of  $I$. De Negri and Hibi, in \cite{deNegri}, characterized those algebra of Veronese type which are Gorenstein.

While ideals of Veronese type are defined by bounding the exponents of the generators, the next family is defined by spreading the variables. In paper \cite{ehq}, the concept of a $t$-spread monomial was introduced. A monomial $x_{i_1}x_{i_2}\cdots x_{i_d}$ with $i_1\leq i_2\leq \dots \leq i_d$ is called {\it $t$-spread} if $i_j -i_{j-1}\geq t$ for $2\leq j\leq n$. We fix integers $d$ and $t$. The monomial ideal in $S$ generated by all $t$-spread monomials of degree $d$ is called the {\it t-spread Veronese ideal of degree $d$}. We denote it by $I_{n,d,t}$. For $t=1$ one obtains the squarefree Veronese ideals, which may also be viewed as the edge ideal of a hypersimplex. Properties of these ideals were first studied  in \cite{Sturmfels},  and then further in \cite{ehq}, \cite{bahareh}, \cite{JZ} and  \cite{JKMR}. The $K$-subalgebra of $S$ generated by the monomials $v\in G(I_{n,d,t})$ is called the {\it $t$-spread Veronese algebra}. In  \cite{dinu} the Gorenstein property for the $t$-spread Veronese algebra was analyzed.

In this paper we present a  unified concept to deal with all the above cases  and to study new such classes.

In Section~\ref{bc} this unified point of view is introduced by identifying  multisets with  monomials. For a given multiset $A=\{i_1\leq i_2\leq \cdots \leq i_d\}$,  the corresponding monomial is $x_{i_1}x_{i_2}\cdots x_{i_d}$.
The multiset $A$ is called {\em $t$-spread}, if $i_{k+1}-i_k \geq t$ for all $k$. In that case  a subset $B \subset A$ is called a {\em block} of size $r$ if $B=\{i_k\leq i_{k+1}\leq \ldots \leq i_{k+r-1}\}$ with $i_{l+1}-i_l=t$ for all $l$. Any multiset has a unique block decomposition into maximal blocks. With this notion at hand one defines different restricted  classes of  Veronese ideals. We denote by  $\MA_{n,d,t}$, the set of all $t$-spread multisets $A \in \MA_{n,d}$, where $\MA_{n,d}$ denotes multisets in $[n]$ with $d$ elements.  For  given positive integers  $c$ and $k$, we consider the following two classes of  multisets:

1.  $\MA_{c,(n,d,t)}$ is the set of all multisets $A \in \MA_{n,d,t}$ such that $|B|\leq c$ for each block $B \subset A$, and we set $I_{c,(n,d,t)}= (\xb_A \:\; A \in \MA_{c,(n,d,t)})$.

2. $\MA_{(n,d,t),k}$ is the set of all multisets $A \in \MA_{n,d,t}$ which have at most $k$ blocks in their unique block decomposition, and we set $I_{(n,d,t),k}= (\xb_A \:\; A \in \MA_{(n,d,t),k}).$

The first family of multisets  gives us the $c$-bounded $t$-spread Veronese ideals which we study in Section~\ref{ctVero}. It is shown that these ideals have linear quotients (Theorem~\ref{asbefore}). In Corollary~\ref{macgor}, we characterize in terms of $c,n,d$ and $t$ when $I_{c,(n,d,t)}$ is  Cohen--Macaulay or  Gorenstein.

Section~\ref{ctVerofiber} is devoted to the study of the powers of the ideal $I_{c,(n,d,t)}$  and its fiber cone. The basic tool for this task is provided  in Theorem~\ref{sortable}, where it is shown that the minimal monomial set of generators of $I_{c,(n,d,t)}$ is a sortable set. This has several important consequences.  First of all it follows in Corollary~\ref{koszul} that the fiber cone of $I_{c,(n,d,t)}$ is Koszul and a Cohen-Macaulay  normal domain. Together with the fact that $I_{c,(n,d,t)}$  satisfies the so-called {\it $l$-exchange property} with respect to the sorting order, as shown in Theorem~\ref{thmx}, we conclude in Corollary~\ref{ReesCM} that the Rees algebra $\mathcal{R}(I_{c,(n,d,t)})$ is a normal Cohen-Macaulay domain. This has the  consequence that $I_{c,(n,d,t)}$ satisfies the strong persistence property and that all powers of $I_{c,(n,d,t)}$ have linear resolution, see Corollary~\ref{strongpersistence}. These results generalize the corresponding statements for $t$-spread  Veronese ideals in \cite{ehq} and follow the similar line of arguments, though the $c$-bound condition, requires substantial extra efforts in the proofs.

In order to compute the analytic spread  of $I_{c,(n,d,t)}$,  which is the Krull dimension of its fiber cone, we present  in Lemma~\ref{better}  a result, which is of quite general interest and generalizes \cite[Lemma 4.2]{JQ}. It is shown that for a monomial ideal $I$ with linear relations which is  generated in a single degree and whose linear relation graph $\Gamma$ has $r$ vertices and $s$ connected components,  the analytic spread of $I$ is given by the formula  $\ell(I)=r-s+1$. Based on this lemma we succeed in Theorem~\ref{headache} to give a complete answer regarding the analytic spread of $I_{c,(n,d,t)}$. For its  proof one introduces  a partial order on the monomial generators of $I_{c,(n,d,t)}$, by saying that $u$ covers $v$ if there exist $i<j$ such that $x_j$ divides $u$ and $v=x_i(u/x_j)$. In Theorem~\ref{maxelem} is shown that there exists a unique maximal element  and a unique minimal with respect to this partial order. The exponents of these elements as well as the integers $c,n,d$ and $t$ determine in  a complicated  but explicit way the analytic spread of $I_{c,(n,d,t)}$. Several  examples demonstrate the result.

In the last section of the paper we study $t$-spread Veronese ideals of bounded block type, namely the ideals of the form    $I_{(n,d,t),k}$.  This is the $t$-spread  monomial ideal in $n$ variables generated in degree $d$ with at most $k$ blocks. In particular,  $I_{(n,d,0),k}$ is the ideal in $n$ variables generated in degree $d$  whose generators have support of  cardinality at most  $k$.

For a suitable integer $m$, the ideal $I_{(n,d,t),k}$ is obtained from $I_{(m,d,0),k}$  by an iterated application of the Kalai shifting  operator. In this paper we concentrate on the study of the ideals $I_{(n,d,0),k}$. It can be easily seen
$I_{(n,d,0),k} = \sum _{i_1 \leq i_2 \leq \ldots \leq i_k} (x_{i_1}, \ldots, x_{i_k})^d$.

These ideals are of height $n$, so that the regularity is determined by their socle degree. The regularity of $I_{(n,d,0),k}$ is given in Theorem~\ref{regblock}. At present we do not know the regularity of $I_{(n,d,t),k}$ when $t>0$, even it is obtained by shifting from an ideal of the type $I_{(m,d,0),k}$. Indeed since $I_{(m,d,0),k}$ is not a stable ideal, there is no obvious relationship between its regularity  and that of its shifted ideals. In Proposition~\ref{k=2} we show that $I_{(n,d,0),k}^{n-1}$ is a power of the maximal ideal, so that this power of $I_{(n,d,0),k}^{n-1}$ has linear resolution. In general much smaller powers of   $I_{(n,d,0),k}$ may have linear resolution.  Knowing this smallest power $j$  such that $I_{(n,d,0),k}^j$ has linear resolution gives us the regularity of the fiber cone
of $I_{(n,d,0),k}$, see Corollary~\ref{fibercone}. The examples that we could check with Singular\cite{Si} indicate that the fiber cone of  $I_{(n,d,0),k}$ has quadratic relations. Among them is the well-known pinched Veronese which is known to be even Koszul, see \cite{caviglia}.

\section{Basic concepts}\label{bc}
Let $\MA_{n,d}$ be the set of all multisets $A \subset [n]$ with $|A|=d$. A multiset
\[
A=\{i_1\leq i_2\leq \ldots \leq i_d\}\subset [n]
\]
 is called $t$-spread if $i_{k+1}-i_k \geq t$ for all $k$.

 We denote by $\MA_{n,d,t}$, the set of all $t$-spread $A \in \MA_{n,d}$. Note that $\MA_{n,d,0}=\MA_{n,d}$.
 Let $A \in \MA_{n,d,t}$. A subset $B \subset A$ is called a {\em block} of size $r$ if $B=\{i_k\leq i_{k+1}\leq \ldots \leq i_{k+r-1}\}$ with $i_{l+1}-i_l=t$ for all $l$. The block $B$ is called {\em maximal} if it is not properly contained in any other block of $A$. Note that every $A \in \MA_{n,d,t}$ has unique {\em block decomposition}. In other words, $A=B_1 \sqcup B_2\sqcup \ldots \sqcup B_k$, where each $B_i$ is a maximal block. The number of blocks in $A$ is called {\em the block type of $A$}. For example, $\{ 1,3,5,8,10,13,16\} \subset \MA_{16, 7, 2}$ has block decomposition
$ B_1=\{1,3,5\}$, $B_2= \{8,10\}$, $B_3 =\{13\}$ and $B_4=\{16\}.$

In this paper, we consider, the following restricted multisets of $\MA_{n,d,t}$. Given positive integers $c$ and $k$, we define:
\begin{enumerate}
\item $\MA_{c,(n,d,t)}$ is the set of all multisets $A \in \MA_{n,d,t}$ such that $|B|\leq c$ for each block $B \subset A$.
\item $\MA_{(n,d,t),k}$ is the set of all multisets $A \in \MA_{n,d,t}$ which have at most $k$ blocks in their unique block decomposition.
\end{enumerate}

Let $K$ be a field and $S=K[x_1, \ldots, x_n]$ be the polynomial ring in $n$ indeterminates. For a multiset  $A=\{i_1\leq i_2\leq \ldots \leq i_r\}$ on $ [n]$, we define the monomial $\xb_{A}=\prod_{j=1}^rx_{i_j}$.

Corresponding to the restricted multisets introduced in (1) and (2), we define the following monomial ideals
\begin{center}
$I_{c,(n,d,t)}= (\xb_A \:\; A \in \MA_{c,(n,d,t)}) \quad \text{and} \quad I_{(n,d,t),k}= (\xb_A \:\; A \in \MA_{(n,d,t),k}).$
\end{center}
\begin{Examples}
(i) The ideal $I_{c,(n,d,0)}$ is of Veronese-type $I_{n,d;(c,\ldots, c)}$. In other words, the ideal $I_{c,(n,d,0)}$ is generated by all monomials in degree $d$ whose exponents are bounded by $c$.

(ii) The ideal $I_{1, (n,d,0)}$ is the squarefree Veronese ideal.

(iii) The ideal $I_{(n,d,0),k}$ is the ideal generated by all $u \in \Mon(S)$ of degree $d$ such that $|\supp(u)| \leq k$.
\end{Examples}

Here, as usual,  the support of a monomial $u$ is the set $\{i\:\, x_i|u\}$. We denote this  set by $\supp(u)$.

In what follows we  also need the concept of the multi-support of a monomial. Let $u=x_{i_1}x_{i_2}\dots x_{i_d}$ with $i_1\leq i_2\leq \dots \leq i_d$. We call the multiset $\{i_1, \dots, i_d\}$ the multi-support of $u$ and denote it by $\msupp(u)$.

\section{$c$-bounded $t$-spread Veronese ideals}\label{ctVero}
In this section we study properties of $c$-bounded $t$-spread Veronese ideals. Recall that $I_{c,(n,d,t)}$ denotes  the $t$-spread monomial ideal in $n$ variables generated in degree $d$ whose blocks are $c$-bounded.

\begin{Theorem}\label{asbefore}
The ideal  $I_{c,(n,d,t)}$ has linear quotients with respect to the lexicographic order.
\end{Theorem}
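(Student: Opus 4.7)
I propose to verify the standard criterion for linear quotients: order the minimal generators of $I_{c,(n,d,t)}$ in decreasing lexicographic order (with $x_1>x_2>\cdots>x_n$), and for any two generators $u\ne v$ with $v>_{\lex}u$, exhibit a generator $w>_{\lex}u$ of the form $w=x_p\cdot u/x_q$ with $p<q$ such that $x_p$ divides $v/\gcd(u,v)$. This is equivalent to saying that the colon ideal of $u$ by the earlier generators is generated by variables.

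Write $\msupp(u)=\{i_1\le\cdots\le i_d\}$ and $\msupp(v)=\{j_1\le\cdots\le j_d\}$ and let $k$ be the smallest index with $i_k\ne j_k$; the lex hypothesis then forces $j_k<i_k$. My candidate is
\[
   w\;:=\;(x_{j_k}/x_{i_k})\cdot u,
\]
obtained by ``decreasing one index.'' Two of the required properties are then immediate: $w>_{\lex}u$ because the first disagreement between the exponent vectors of $w$ and $u$ occurs at the coordinate $j_k$, where $w$ is strictly bigger, and $w/\gcd(w,u)=x_{j_k}$. The divisibility $x_{j_k}\mid v/\gcd(u,v)$ reduces to checking that $j_k$ occurs strictly more often in $\msupp(v)$ than in $\msupp(u)$; this holds by a positional count, since the first $k-1$ entries of the two sorted tuples agree, $v$ has an extra copy of $j_k$ at position $k$, and every later entry of $\msupp(u)$ is $\ge i_k>j_k$.

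The essential task, and the main obstacle, is to show that $w$ again lies in $I_{c,(n,d,t)}$, since the $c$-bounded condition is not obviously preserved under such a swap. The sorted multi-support of $w$ is $(i_1,\ldots,i_{k-1},j_k,i_{k+1},\ldots,i_d)$; it stays sorted because $i_{k-1}=j_{k-1}\le j_k<i_k\le i_{k+1}$. The $t$-spread property is easy: $j_k-i_{k-1}=j_k-j_{k-1}\ge t$ is inherited from $v$, and $i_{k+1}-j_k>i_{k+1}-i_k\ge t$ from $u$. For the $c$-bound my plan rests on the observation that the consecutive differences in the sorted tuple of $w$ coincide with those of $v$ on positions $\le k$ and with those of $u$ on positions $\ge k+1$, while the strict inequality $i_{k+1}-j_k>t$ creates a new block break between positions $k$ and $k+1$. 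It follows that the block of $w$ meeting position $k$ is a right-truncation of the block of $v$ at position $k$, so has size at most $c$; the block of $w$ meeting position $k+1$ is a left-truncation of the block of $u$ at position $k+1$, so also has size at most $c$; and every remaining block of $w$ agrees with a block of $u$. The degenerate case $k=d$ is simpler because there is no position $k+1$ to inspect. This completes the verification and establishes linear quotients.
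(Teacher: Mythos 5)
Your proposal is correct and follows essentially the same route as the paper's proof: both locate the first position where the sorted multi-supports of the lex-smaller generator and a lex-larger generator disagree, swap in the smaller index at that position, and verify membership in $I_{c,(n,d,t)}$ by observing that the spliced monomial inherits the $t$-spread condition and that the strict gap at the splice point prevents any block from crossing it, so every block sits inside a block of one of the two original generators and is therefore $c$-bounded. Your write-up is, if anything, slightly more explicit about the divisibility count and the degenerate case $k=d$.
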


\begin{proof} For simplicity, we set $I=I_{c,(n,d,t)}$.
Let $G(I)=\{u_1, \dots, u_m\}$ ordered with respect to the lexicographic order. Let $r\leq m$ and $J=(u_1, \dots, u_{r-1})$. We want to show that $J: u_r$ is generated by variables. In this context, it is enough to prove that for all $1\leq k\leq r-1$ there exists $x_i \in J:u_r$ such that $x_i$ divides $u_k/\gcd(u_k, u_r)$. Let $u_k=x_{i_1}x_{i_2}\dots x_{i_d}$ with $i_1\leq i_2\leq \ldots \leq i_d$ and $u_r= x_{j_1}x_{j_2}\dots x_{j_d}$ with $j_1\leq j_2\leq \ldots \leq  j_d$. Since $u_k >_{lex} u_r$ there exists $q$ with $1\leq q\leq t$ such that $i_1=j_1$, $\dots$, $i_{q-1}=j_{q-1}$ and $i_q<j_q$. Let $v=x_{i_q}(u_r/x_{j_q})$.
Then, $v=x_{j_1}x_{j_2}\dots x_{j_{q-1}}x_{i_{q}}x_{j_{q+1}}\dots x_{j_d}=x_{i_1}x_{i_2}\dots x_{i_{q-1}}x_{i_{q}}x_{j_{q+1}}\dots x_{j_d}$. Since $j_{q+1}-i_q> j_{q+1}-j_q\geq t$, we see that $v$ is $t$-spread and any maximal block belongs to $\{j_1, \dots , j_{q-1},i_q\}= \{i_1, \dots , i_{q-1},i_q\}$ or $\{j_{q+1}, \dots, j_d\}$. Hence, since $u_k$ and $u_r$ are $c$-bounded, it follows that $v$ is also $c$-bounded. Thus, $v\in G(J)$ because $v\in G(I)$ and $v>_{\lex} u_r$. Therefore, $x_{i_q}\in J: u_r$ and $x_{i_q}$  divides $u_k/\gcd(u_k, u_r)$, as required.
 \end{proof}

Let $I=I_{c,(n,d,t)}$   and $u \in I$,    and   let $B_1\sqcup\cdots \sqcup B_r$ be the block decomposition of $\msupp(u)$.  For $j=1,\ldots,r-1$,  the  {\em $j$th gap  interval} of $u$ is the interval
$[ \max(B_j)+t,\min(B_{j+1})-1]$  if $|B_j|<c$ and is the interval $[ \max(B_j)+t+1,\min(B_{j+1})-1]$ if $|B_j|=c$. Furthermore, we call $[1,\min(B_1)-1]$ the $0$th gap interval.

 \medskip
By using this terminology we have

\begin{Lemma}
\label{gap}
Let $u\in G(I)$. Then $i\in \set(u)$ if and only if $i$ belongs to some gap interval of $u$.
\end{Lemma}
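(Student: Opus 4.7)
The plan is to use the linear quotients characterization from Theorem~\ref{asbefore}: in the lex order on $G(I)$, one has $i\in\set(u)$ if and only if there exists $l\in\msupp(u)$ with $l>i$ such that $v:=x_iu/x_l$ still lies in $G(I)$. (The condition $l>i$ is forced by $v>_{\lex}u$ and rules out the trivial choice $l=i$.) Write $\msupp(u)=B_1\sqcup\cdots\sqcup B_r$ for the block decomposition.

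For the ``if'' direction, given $i$ in the $j$th gap interval I take $l=\min(B_{j+1})$ as the witness. The multiset $A':=\msupp(v)$ is $t$-spread: the gap definition gives $i-\max(B_j)\geq t$ when $j\geq 1$, while the smallest element of $A'$ strictly above $i$ is at least $\min(B_{j+1})+t$, hence at distance at least $t+1$ from $i$ (since $i\leq\min(B_{j+1})-1$). The $c$-bound is preserved: $i$ either forms a new singleton block, or (only when $i=\max(B_j)+t$) attaches to $B_j$, and in the latter case the gap definition itself requires $|B_j|<c$, so the enlarged block still has size at most $c$. Hence $v\in G(I)$.

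For the converse, assume $v=x_iu/x_l\in G(I)$ with $l>i$ in $\msupp(u)$. If $i<\min(B_1)$, then $i$ is in the $0$th gap interval. Otherwise pick $j$ with $\min(B_j)\leq i<\min(B_{j+1})$ (setting $\min(B_{r+1})=\infty$); note $i\leq\max(B_r)$ since $l\leq\max(B_r)$. If $i\in(\max(B_j),\min(B_{j+1}))$, then $l>i>\max(B_j)$ keeps $B_j$ intact in $A'$, so the $t$-spreadness of $A'$ forces $i\geq\max(B_j)+t$, and the $c$-bound rules out $i=\max(B_j)+t$ when $|B_j|=c$---this is exactly the $j$th gap interval. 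The remaining case $i\in[\min(B_j),\max(B_j)]$ is handled as follows: for $t\geq 1$, $i$ either equals an element of $B_j$ (producing two equal entries in $A'$) or sits strictly between two elements of $B_j$ at distance less than $t$ from each, and since we only remove one element $l$ with $l>i$, at least one conflicting pair in $A'$ survives, contradicting $v\in G(I)$. For $t=0$, $B_j$ is a constant run, so this case forces $i=\max(B_j)$; the enlarged block then has size $|B_j|+1$, and the $c$-bound requires $|B_j|<c$, placing $i$ in the $j$th gap interval $[\max(B_j),\min(B_{j+1})-1]$.

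The main technical obstacle is the reverse direction, where the interplay between $t$-spreadness and the $c$-bound on $A'$ must be tracked carefully to recover the precise endpoint $\max(B_j)+t$ versus $\max(B_j)+t+1$ of the $j$th gap interval. The $t=0$ case also merits separate attention because multisets can have repeated values there, so ``inside a block'' becomes a genuine sub-case rather than a trivial one.
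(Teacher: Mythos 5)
Your proof is correct and takes essentially the same route as the paper: the forward direction uses the identical witness $v=x_iu/x_{\min(B_{j+1})}$, and the reverse direction is the same case analysis on the position of $i$ relative to the blocks, with $t$-spreadness ruling out positions in or too close to a block and the $c$-bound ruling out $i=\max(B_j)+t$ when $|B_j|=c$. The paper merely phrases the converse as excluding the forbidden intervals $[\min(B_j),\max(B_j)+t-1]$ (resp.\ $[\min(B_j),\max(B_j)+t]$) rather than locating $i$ directly, but the content is the same.
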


\begin{proof}
Let  $i $ be an integer that belongs to the $j$th gap interval of $u$,  and  let  $k=\min(B_{j+1})$ and    $v=x_i u / x_{k}$. It  follows from the definition of the gap intervals that $v\in I$.  Since  $v >_{\lex} u$, it follows that $i\in \set(u)$.

Let $i\in \set(u)$. Then there exists $v=x_iu/x_{k}\in I$ for some $i<k$. In particular, $i < k \leq \max(u)=\max(B_r)$. We have the following:
\begin{enumerate}
\item $i \notin [\min(B_r), \max(B_{r})]$, otherwise $v$ is not $t$-spread if $t\geq 1$, and $i=k$ if $t=0$,  which in both cases is a contradiction.

\item For any $1 \leq j \leq r-1$ with $|B_j|< c$, we have $i \notin[\min(B_j), \max(B_j)+t-1]$. Otherwise, if $t \geq 1$ then $v$ is not $t$-spread, and  if $t=0$, then $[\min(B_j), \max(B_j)-1]=\emptyset$ because $\min(B_j)=\max(B_j)$, which in both cases is a contradiction.
\item  For any $1 \leq j \leq r-1$ with $|B_j|= c$, we have $i \notin [\min(B_j), \max(B_j)+t]$. Otherwise, $v$ is not $c$-bounded if $i= \max(B_j)+t$. Furthermore, if $t\geq 1$ then $v$ is not $t$-spread for $\min(B_j) \leq t < \max(B_j)+t$, and if $t=0$, then again $v$ is not $c$-bounded, which again in all cases leads to a contradiction.

\end{enumerate}

The only possibility that remains is that $i$ belongs to a $j$th gap interval for some $0\leq j \leq r-1$.
\end{proof}

\begin{Examples}
(i) Let $u=x_1x_2x_3x_6x_{10} \in I_{3, (10, 5, 1)}$. Then $\set(u)=\{5,7,8,9\}$.\\
(ii) Let $u=x_1^3x_3^2x_5 \in I_{3, (5,6,0)}$. Then $\set(u)=\{2,3,4\}$.
\end{Examples}

 We define the {\em Kalai stretching operator} $\sigma$ as follows: for $A=\{i_1\leq i_2\leq \dots\leq i_d\}\in [n]$, $A^{\sigma}$ is the multiset $\{i_1, i_2+1,\dots, i_d+d-1\}\subseteq [n+d-1]$. Inductively, for any $t\geq 1$, we define $A^{\sigma^{t}}=(A^{\sigma^{t-1}})^{\sigma}$. For $u=x_{A}$, we set $u^{\sigma^t}=x_{A^{\sigma^t}}$. We define the inverse map of $\sigma$ by $\tau$ as follows: for $A=\{i_1< i_2< \dots< i_d\}\in [n]$, $A^{\tau}$ is the set $\{i_1, i_2-1,\dots, i_d-(d-1)\}\subseteq [n-(d-1)]$.

 \begin{Proposition}\label{shift} The stretching operator $\sigma$  has the following properties:
 \begin{enumerate}[{\em(i)}]
\item $ v\in G(I_{c, (n+d-1, d, t+1)})$ if and only if there exists $u\in G(I_{c,(n,d,t)})$ such that $v= u^{\sigma}$;
%\item Let $u\in G(I_{c,(n,d,t)})$, $u=x_{A}$ and  $A=\{i_1\leq \dots \leq i_d\}$. Assume $\set(u)=\{a_1, \dots, a_r\}$. Then $\set(u^{\sigma})=\{b_1+a_1, \dots, b_r+a_r\}$, where $b_i=\max\{j: i_j \leq a_i\}$.
\item Let $u\in G(I_{c,(n,d,t)})$, $u=x_{A}$ and  $A=\{i_1\leq \dots \leq i_d\}$. Assume $\set(u)=\{a_1, \dots, a_r\}$. Then $\set(u^{\sigma})=\{b_1, \dots, b_r\}$, where $b_i=a_i+\max\{j: i_j \leq a_i\}$ for all $i$ and $\max \emptyset=0$.
  \end{enumerate}
 \end{Proposition}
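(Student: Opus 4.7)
The plan for part (i) is to observe that the operator $\sigma$ increases every consecutive gap of a multiset by exactly one: if $A=\{i_1\leq\cdots\leq i_d\}$, then in $A^{\sigma}$ the difference between consecutive entries is $(i_{k+1}+k)-(i_k+(k-1)) = (i_{k+1}-i_k)+1$. Hence $A$ is $t$-spread iff $A^{\sigma}$ is $(t+1)$-spread, and a maximal run of gaps equal to $t$ in $A$ corresponds to a maximal run of gaps equal to $t+1$ in $A^{\sigma}$. In other words, the block decompositions correspond with identical block sizes, so the $c$-bound condition carries over. Checking endpoints gives $\max(A^{\sigma})=i_d+d-1\leq n+d-1$ and conversely $\max(B^{\tau})\leq n$ for any $(t+1)$-spread $B\subseteq [n+d-1]$ of size $d$, so $\sigma$ is a bijection between the generator sets of $I_{c,(n,d,t)}$ and $I_{c,(n+d-1,d,t+1)}$ with inverse $\tau$.

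For part (ii) I would use Lemma~\ref{gap}: $\set(u)$ (respectively $\set(u^{\sigma})$) is the disjoint union of the gap intervals of $u$ (respectively $u^{\sigma}$). Let $A=B_1\sqcup\cdots\sqcup B_r$ and let $p_j$ denote the starting position of $B_j$ in the sequence $i_1,\ldots,i_d$ (so $p_1=1$, $\min(B_j)=i_{p_j}$, and $\max(B_j)=i_{p_{j+1}-1}$ with $p_{r+1}=d+1$). From part (i), $|B_j^{\sigma}|=|B_j|$, and a direct computation gives $\min(B_j^{\sigma})=i_{p_j}+(p_j-1)$ and $\max(B_j^{\sigma})=i_{p_{j+1}-1}+(p_{j+1}-2)$. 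Substituting into the definition of the $j$th gap interval and splitting into the cases $|B_j|<c$ and $|B_j|=c$, one checks that for $j\geq 1$ the $j$th gap interval of $u^{\sigma}$ is the translate by $p_{j+1}-1$ of the $j$th gap interval of $u$ (and the $0$th gap interval is unchanged, consistent with $p_1-1=0$). On the other hand, on the $j$th gap interval of $u$ one has $\max\{k:i_k\leq a\}=p_{j+1}-1$, since $i_{p_{j+1}-1}\leq a<i_{p_{j+1}}$ in each case. Hence the map $a\mapsto a+\max\{k:i_k\leq a\}$, which is strictly increasing in $a$, restricts to the translation by $p_{j+1}-1$ on each gap interval and therefore induces an order-preserving bijection $\set(u)\to \set(u^{\sigma})$, which is precisely the asserted formula $b_i=a_i+\max\{k:i_k\leq a_i\}$.

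The main technical obstacle will be the case analysis around the endpoints of the gap intervals: the left endpoint receives an extra $+1$ when $|B_j|=c$, and when $t=0$ one must verify that $\max\{k:i_k\leq a\}$ still equals $p_{j+1}-1$ even at the smallest admissible values of $a$ (which uses that a $0$-spread block is simply a run of equal values, so that $i_{p_j}=\cdots=i_{p_{j+1}-1}$). Once the formulas for $\min(B_j^{\sigma})$, $\max(B_j^{\sigma})$, and $|B_j^{\sigma}|$ are established, the remaining verification reduces to careful bookkeeping of interval endpoints.
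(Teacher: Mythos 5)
Your proposal is correct and follows essentially the same route as the paper: part (i) via the observation that $\sigma$ shifts each consecutive gap by one so that blocks correspond with equal sizes, and part (ii) via Lemma~\ref{gap}, computing that the $j$th gap interval of $u^{\sigma}$ is the translate of the $j$th gap interval of $u$ by the index $l=p_{j+1}-1$ of $\max(B_j)$, which equals $\max\{k: i_k\leq a\}$ on that interval. The bookkeeping you defer (the $+1$ shift of the left endpoint when $|B_j|=c$, the $0$th interval, and the $t=0$ case) is exactly the case analysis the paper carries out, so there is no gap.
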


 \begin{proof}
 (i) If  $A$ is $t$-spread, it follows that $A^{\sigma}$ is $t+1$-spread. Moreover, if $B_1\sqcup \dots \sqcup B_r$, is the block decomposition of $A$, then the block decomposition $A^{\sigma}$ is  $C_1\sqcup\cdots \sqcup C_r$ such that if $B_j=\{i_k \leq i_{k+1} \leq \ldots \leq {i_l}\}$ then $C_j= \{i_k+k-1 < i_{k+1}+k < \ldots < i_l+l-1\}$. This shows that  $v\in I_{c, (n+d-1, d, t+1)}$ if $u\in I_{c,(n,d,t)}$. Similarly one shows that if $ v\in I_{c, (n+d-1, d, t+1)} $ and  $u=v^{\tau}$, then $u\in I_{c, (n, d, t)}$. Since $u^{\sigma}=v$, this completes the proof.

 (ii)   From Lemma~\ref{gap}, we know $\set(u)$ and $\set(u^{\sigma})$ are equal to the union of all gap intervals of $A$ and $A^{\sigma}$, respectively. From the construction of $A$ and $A^{\sigma}$ in part (i), we see that $A$ and $A^{\sigma}$ has same number of blocks and hence the same number of gap intervals. Note that the $0$th gap interval is same for $A$ and $A^{\sigma}$ because $\min(B_1)=\min(C_1)$. Therefore, $a_i$ belongs to the $0$th gap interval of $u$ if and only if $a_i=b_i$ belongs to the $0$th gap interval of $u^{\sigma}$.

For $j>0$, by following the definition of gap intervals, the $j$th gap interval is given as follows :
\begin{equation}\label{1}
a_i \in \{i_l+p, i_l+p+1, \ldots, i_{l+1}-1\},
\end{equation}
 where $i_l =\max(B_j)$ and $p=t$ if $|B_j|<c$ and $p=t+1$ if $|B_j|=c$. Note that $\max\{j: i_j \leq a_i\}=l$. Furthermore, the $j$th gap interval of $u^{\sigma}$ is
 \[
 \{i_l+ (l-1)+q, i_l+(l-1)+q+1, \ldots, i_{l+1}+l-1\},
 \]
 where $i_l+l-1= \max(C_j)$ and $q=t+1$ if $|C_j|<c$ and $q=t+2$ if $|C_j|=c$. Since $|B_j|=|C_j|$, we have $q=p+1$. So we may write the $j$th gap interval of $u^{\sigma}$ as
\begin{equation}\label{2}
 \{i_l+p+l, i_l+p+1+l, \ldots, i_{l+1}-1+l\}.
\end{equation}
Therefore, for any $j>0$, $a_i$ is in $j$th gap interval of $u$  if and only if $b_i=a_i+l$ is in $j$th gap interval of $u^{\sigma}$.

 \end{proof}

\begin{Corollary}\label{burned}
$\beta_{i}(I_{c,(n,d,t)})= \beta_{i}(I_{c,(n-(d-1)t,d,0)})$ for all $i$.
\end{Corollary}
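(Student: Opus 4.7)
The plan is to combine the linear quotients property (Theorem~\ref{asbefore}) with the iterated stretching bijection of Proposition~\ref{shift}. Recall that for a monomial ideal $I$ with linear quotients, generated in a single degree with respect to some admissible ordering $u_1,\dots,u_m$ of $G(I)$, the graded Betti numbers are computed by the formula
\[
\beta_i(I)=\sum_{u\in G(I)}\binom{|\set(u)|}{i},
\]
where $\set(u_r)=\{k : x_k \in (u_1,\dots,u_{r-1}):u_r\}$. Since Theorem~\ref{asbefore} shows that $I_{c,(n,d,t)}$ has linear quotients with respect to the lexicographic order for every admissible choice of parameters, this formula applies to both $I_{c,(n,d,t)}$ and its ``flattened'' counterpart $I_{c,(n-(d-1)t,d,0)}$.

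Next I would transport the generators across the $\sigma$/$\tau$ operators. By Proposition~\ref{shift}(i), iterating $\tau$ a total of $t$ times gives a bijection
\[
\tau^{t}: G(I_{c,(n,d,t)}) \To G(I_{c,(n-(d-1)t,d,0)}),
\]
whose inverse is $\sigma^{t}$. By Proposition~\ref{shift}(ii), each single application of $\sigma$ preserves the cardinality of $\set(u)$ (it merely shifts each element of $\set(u)$ by a nonnegative integer determined by the block structure), and so does each application of $\tau$. Consequently, $|\set(u)|=|\set(\tau^{t}(u))|$ for every $u\in G(I_{c,(n,d,t)})$.

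Therefore the two sums
\[
\sum_{u\in G(I_{c,(n,d,t)})}\binom{|\set(u)|}{i}
\quad\text{and}\quad
\sum_{v\in G(I_{c,(n-(d-1)t,d,0)})}\binom{|\set(v)|}{i}
\]
agree term by term under the bijection $\tau^{t}$, yielding $\beta_i(I_{c,(n,d,t)})=\beta_i(I_{c,(n-(d-1)t,d,0)})$ for all $i$, as desired.

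The only subtle point, and the main thing to be careful about, is verifying that the $\set$ function used in the Betti number formula truly matches the $\set(u)$ described in Lemma~\ref{gap}; this amounts to checking that the lexicographic order on $G(I_{c,(n,d,t)})$ and that on $G(I_{c,(n-(d-1)t,d,0)})$ are both the admissible orders with respect to which linear quotients were established in Theorem~\ref{asbefore}. Once this compatibility is recorded, the proof is a direct bookkeeping argument, and no further combinatorics is needed.
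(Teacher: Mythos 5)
Your proposal is correct and follows essentially the same route as the paper: both invoke the linear-quotients Betti number formula from \cite[Lemma 1.5]{JT}, use Proposition~\ref{shift}(i) to identify $G(I_{c,(n,d,t)})$ with $\{u^{\sigma^t}: u\in G(I_{c,(n-(d-1)t,d,0)})\}$, and use Proposition~\ref{shift}(ii) to conclude $|\set(u)|=|\set(u^{\sigma^t})|$. The only cosmetic difference is that you phrase the bijection via $\tau^t$ rather than $\sigma^t$.
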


\begin{proof}
We use the following general fact (see \cite[Lemma 1.5]{JT}): Let $I$ be a monomial ideal generated in degree $d$ with linear quotients. Then
\begin{equation}\label{eq}
\beta_{i}(I)=|\{\alpha \subset \set(u): u\in G(I) \text{ and } |\alpha|=i\}|.
\end{equation}

Now, let $G(I_{c,(n-(d-1)t,d,0)})=\{u_1,\dots, u_r\}$, then $G(I_{c,(n,d,t)})=\{u_1^{\sigma^t}, \dots, u_r^{\sigma^t}\}$, by Proposition \ref{shift}(i), and by Proposition \ref{shift}(ii) we have $|\set(u_j)|=|\set(u_j^{\sigma^t})|$ for all $j$. Thus, the desired conclusion follows from \eqref{eq}.
\end{proof}

\begin{Lemma}\label{height}
Let $I\subset S$ be  a graded ideal in $S=K[x_1, \dots, x_n]$ and $J \subset T$ a graded ideal in $T=K[x_1,\dots, x_{n^{\prime}}]$ such that $\beta_{i,j}(I)=\beta_{i,j}(J)$ for all $i$ and $j$. Then
\[
\height(I)=\height(J).
\]
\end{Lemma}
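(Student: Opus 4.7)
The plan is to show that the height of a graded ideal is determined by the graded Betti numbers of the corresponding quotient alone, in a way that does not depend on the number of variables of the ambient polynomial ring. Once this is achieved, the equality $\beta_{i,j}(I)=\beta_{i,j}(J)$ will force $\height(I)=\height(J)$.

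The first step is to pass from $I,J$ to the quotients $S/I$ and $T/J$. Using $\beta_{0,0}(S/I)=1$ and $\beta_{i+1,j}(S/I)=\beta_{i,j}(I)$ for $i\ge 0$ (and the analogous identities for $J$), the hypothesis gives $\beta_{i,j}(S/I)=\beta_{i,j}(T/J)$ for every $i,j$. In particular, the two numerator polynomials $K_{S/I}(t)=\sum_{i,j}(-1)^i\beta_{i,j}(S/I)\,t^j$ and $K_{T/J}(t)=\sum_{i,j}(-1)^i\beta_{i,j}(T/J)\,t^j$ coincide; call this common polynomial $K(t)$.

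The second step invokes the standard Hilbert series identity obtained from the minimal graded free resolution, namely $H_{S/I}(t)=K(t)/(1-t)^n$ and $H_{T/J}(t)=K(t)/(1-t)^{n'}$. By the well-known characterisation of Krull dimension via the pole order of the Hilbert series at $t=1$, and writing $v=\mathrm{ord}_{t=1}K(t)$, one obtains $\dim(S/I)=n-v$ and $\dim(T/J)=n'-v$. Since $S$ and $T$ are Cohen--Macaulay polynomial rings, $\height(I)=n-\dim(S/I)=v$ and $\height(J)=n'-\dim(T/J)=v$, so the two heights agree.

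No genuine obstacle is expected, since both ingredients (the Hilbert series read off from the minimal free resolution, and the dimension-via-pole-order principle) are classical. The only bookkeeping point that needs care is the shift between Betti numbers of the ideal and those of its quotient, together with the contribution $\beta_{0,0}(S/I)=1$ coming from the free module $S$ at the beginning of the resolution of $S/I$; this is what makes the common polynomial $K(t)$ well defined from the data $\{\beta_{i,j}(I)\}=\{\beta_{i,j}(J)\}$.
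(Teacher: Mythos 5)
Your proposal is correct and follows essentially the same route as the paper: both read the Hilbert series off the minimal graded free resolution, observe that the numerator polynomial is determined by the (common) graded Betti numbers, and identify $\height(I)=n-\dim(S/I)$ with the order of vanishing of that numerator at $t=1$. The only difference is that you make explicit the bookkeeping shift $\beta_{i+1,j}(S/I)=\beta_{i,j}(I)$ and $\beta_{0,0}(S/I)=1$, which the paper leaves implicit.
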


\begin{proof}
Denote by $d= \dim(S/I)$ and by $d^{\prime}=\dim(T/J)$. Since $I$ and $J$ have the same graded Betti numbers, we have that $\Hilb(S/I)=\frac{P(t)}{(1-t)^n}=\frac{Q(t)}{(1-t)^{d}}$, $Q(1)\neq 0$, and $\Hilb(T/J)=\frac{P(t)}{(1-t)^{n^{\prime}}}=\frac{Q^{\prime}(t)}{(1-t)^{d^{\prime}}}$, $Q^{\prime}(1)\neq 0$. Thus,
 $P(t)=Q(t)(1-t)^{n-d}=Q^{\prime}(t)(1-t)^{n^{\prime}-d^{\prime}}$. Hence, $n-d= n^{\prime}-d^{\prime}$, which leads to the desired conclusion.
\end{proof}

\begin{Proposition}
\label{height}
{\em(i)} $\height(I_{c,(n,d,t)})= \height(I_{c,(n-(d-1)t,d,0)}).$

{\em (ii)} $\height(I_{c,(n,d,t)})=n-(k+t(d-1))$, where $k=\lfloor \frac{d-1}{c}\rfloor$ and $r=d-kc$.

{\em (iii)} $n-(k+t(d-1))=\max\{\min(u) \:\; u \in G(I_{c,(n,d,t)})\}$.
\end{Proposition}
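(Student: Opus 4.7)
The plan is to handle the three parts in the given order.

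For (i), I would apply Corollary~\ref{burned} to get $\beta_i(I_{c,(n,d,t)}) = \beta_i(I_{c,(n-(d-1)t,d,0)})$ for all $i$. Both ideals are generated in the single degree $d$ and have linear quotients by Theorem~\ref{asbefore}, so they have linear resolutions; their graded Betti numbers are therefore concentrated on the diagonal $j=i+d$, and the equality of total Betti numbers promotes to $\beta_{i,j}(I_{c,(n,d,t)}) = \beta_{i,j}(I_{c,(n-(d-1)t,d,0)})$ for all $i,j$. Lemma~\ref{height} then delivers (i).

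For (ii), by (i) it suffices to compute $\height(I_{c,(m,d,0)})$ with $m:=n-(d-1)t$. I would analyze the minimal primes directly: each has the form $P_S=(x_i:i\in S)$ for some $S\subset[m]$, and $P_S\supset I_{c,(m,d,0)}$ exactly when no generator is supported in $T:=[m]\setminus S$. Since generators are degree-$d$ monomials with exponents bounded by $c$, the set $T$ supports such a generator iff $c|T|\geq d$. Hence $P_S$ contains the ideal iff $|T|\leq \lceil d/c\rceil - 1 = \lfloor(d-1)/c\rfloor = k$, yielding $\height = m - k = n-(k+t(d-1))$.

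For (iii), I would write $d=kc+r$ with $1\leq r\leq c$ and let $A\in\MA_{c,(n,d,t)}$ have block decomposition $B_1\sqcup\cdots\sqcup B_m$ with $|B_i|=b_i\leq c$, so $m\geq\lceil d/c\rceil=k+1$. Summing intra-block widths and the mandatory inter-block gaps gives
\[
\max(A)-\min(A) \;\geq\; (d-m)t + (m-1)(t+1) \;=\; (d-1)t + (m-1) \;\geq\; (d-1)t + k,
\]
so $\min(A)\leq n-(k+t(d-1))$. Equality is realized by the right-anchored packing with exactly $k+1$ blocks (one of size $r$, the remaining $k$ of size $c$) separated by the minimal admissible gap $t+1$; this produces a valid element of $G(I_{c,(n,d,t)})$ meeting the bound.

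The main arithmetic point that will require care is the identity $\lceil d/c\rceil - 1 = \lfloor(d-1)/c\rfloor = k$ and its bookkeeping in the boundary case $c\mid d$ (where $r=c$). A modest geometric subtlety in (iii) is verifying that the extremal packing does not accidentally fuse adjacent blocks into one, which is automatic because the inter-block gap $t+1$ strictly exceeds the intra-block step $t$.
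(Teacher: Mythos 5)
Your proof is correct, and in two of the three parts it is more self-contained than the paper's. Part (i) is exactly the paper's argument: Corollary~\ref{burned} gives equal total Betti numbers, the $d$-linearity of both resolutions (via Theorem~\ref{asbefore}) concentrates the graded Betti numbers on the diagonal $j=i+d$ so the equality upgrades to graded Betti numbers, and Lemma~\ref{height} finishes. For part (ii) the paper simply quotes the height formula for Veronese type ideals from \cite[Proposition 3.1]{assocprim}, whereas you re-derive it by classifying the monomial primes $P_S\supseteq I_{c,(m,d,0)}$ through the condition $c\,|T|<d$ on the complement $T=[m]\setminus S$ and using $\lceil d/c\rceil-1=\lfloor (d-1)/c\rfloor=k$; this is a correct, elementary substitute at the cost of redoing a known computation. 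For part (iii) the paper only exhibits the lexicographically smallest generator and declares the extremality ``easy to see''; your span inequality $\max(A)-\min(A)\geq (d-m)t+(m-1)(t+1)=(d-1)t+(m-1)\geq (d-1)t+k$, using that $c$-boundedness forces at least $\lceil d/c\rceil=k+1$ blocks and that inter-block gaps are at least $t+1$, supplies the upper bound on $\min(A)$ that the paper leaves implicit, and your right-anchored packing is the same extremal element the paper writes down. (Incidentally, your computation exposes a small typo in the paper's displayed monomial: the leading index should be $n-k\bigl((t+1)+(c-1)t\bigr)-(r-1)t=n-(d-1)t-k$, not $\cdots-rt$, which would come out $t$ too small.) The only point neither you nor the paper addresses is the degenerate case $n<(d-1)t+k+1$, where $G(I_{c,(n,d,t)})=\emptyset$ and the statement is vacuous; this is harmless.
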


\begin{proof}
(i) By Corollary \ref{burned} the ideals $I_{c,(n,d,t)}$ and  $I_{c,(n-(d-1)t,d,0)}$   have the same  Betti numbers and  by Theorem~\ref{asbefore} they both have $d$-linear resolution. Therefore,  their the graded Betti numbers are the same. Thus,  by applying Lemma \ref{height}, we conclude that the two ideals have the same height. Part (ii) follows from part (i) and \cite[Proposition 3.1]{assocprim}. For part (iii), it is easy to see that the smallest monomial generator of $I_{c,(n,d,t)}$ with respect to the lexicographic order is the monomial $$x_{n-k((t+1)+(c-1)t)-rt} \cdots x_{n-(c-1)t-(t+1)}x_{n-(c-1)t}\cdots x_{n-t}x_n.$$ Thus we obtain the desired conclusion.
\end{proof}

\begin{Corollary}
\label{macgor}
{\em (i)} $I_{c,(n,d,t)}$ is Cohen-Macaulay if and only if $n\leq t(d-1)+\lfloor\frac{d-1}{c}\rfloor+1$, or $d \leq c$, or $c=1$.

{\em (ii)} $I_{c,(n,d,t)}$ is Gorenstein if  and only if $n\leq t(d-1)+\lfloor\frac{d-1}{c}\rfloor+1$ or  $d=1$.
%\end{enumerate}
\end{Corollary}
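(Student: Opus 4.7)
The plan is to translate the Cohen--Macaulay and Gorenstein conditions into combinatorial statements about the sets $\set(u)$, using the linear-quotients structure of $I := I_{c,(n,d,t)}$ from Theorem~\ref{asbefore}. Since $I$ is generated in the single degree $d$, the linear-quotients hypothesis yields a $d$-linear resolution and the Herzog--Takayama formula $\pd(S/I) = 1 + \max_{u\in G(I)}|\set(u)|$. Together with the height formula of Proposition~\ref{height}(ii), the Cohen--Macaulay criterion becomes
\[
\max_{u\in G(I)}|\set(u)| \;=\; n - k - t(d-1) - 1.
\]

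First I would reduce to $t=0$. Proposition~\ref{shift}(i)--(ii) provides a bijection $u \mapsto u^{\sigma^t}$ from $G(I_{c,(m,d,0)})$ to $G(I_{c,(n,d,t)})$, with $m := n - (d-1)t$, that preserves $|\set(\cdot)|$, and the target $n - k - t(d-1) - 1 = m - k - 1$ is invariant under this reduction. So it suffices to analyse $J := I_{c,(m,d,0)}$. For $u = x_{a_1}^{s_1}\cdots x_{a_r}^{s_r} \in G(J)$ with $a_1 < \cdots < a_r$, $\sum s_j = d$, and each $s_j \leq c$, Lemma~\ref{gap} specialised to $t = 0$ yields
\[
|\set(u)| \;=\; a_r - 1 - \#\{\,j < r \ : \ s_j = c\,\}.
\]

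The core of part (i) is then a case analysis along the three stated alternatives. When $c = 1$, every $s_j$ equals $c$, so the count is automatically $r - 1 = d - 1 = k$ and the maximum (attained when $a_r = m$) equals $m - d = m - k - 1$. When $d \leq c$, one has $k = 0$ and the generator $x_m^d$ realises $|\set(x_m^d)| = m - 1 = m - k - 1$. When $n \leq t(d-1) + k + 1$, i.e.\ $m \leq k + 1$, the generating set is either empty or highly restricted; here I would read off $|\set(u)|$ from the compositions $s_1, \ldots, s_{k+1}$ compatible with the $c$-bound, using the explicit lex-smallest generator from the proof of Proposition~\ref{height}(iii), to verify $\max|\set(u)| = m - k - 1$. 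Conversely, when none of the three conditions holds one has $c \geq 2$, $d > c$, and $m \geq d$, and the squarefree generator $x_{m-d+1}\cdots x_m$ produces $|\set(u)| = m - 1 > m - k - 1$, blocking Cohen--Macaulayness.

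For part (ii), the Betti-number formula from linear quotients gives $\beta_{\pd(I)}(I) = \#\{u \in G(J) : |\set(u)| = M\}$ with $M = \max_u|\set(u)|$, and in the CM case this is the Cohen--Macaulay type of $S/I$. So $S/I$ is Gorenstein iff it is CM and $M$ is attained by a unique generator. Revisiting the case analysis, uniqueness fails in the generic CM branches---for instance $x_m^d$ and $x_{m-1}x_m^{d-1}$ are two distinct maximisers when $2 \leq d \leq c$ and $m \geq 2$, and many squarefree monomials with $a_r = m$ maximise when $c = 1$ and $m > d$---but it holds precisely when $d = 1$ (so $I = \mm$) or $n \leq t(d-1) + k + 1$ (so $|G(I)| \leq 1$). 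The main obstacle will be the boundary regime $m = k + 1$ in both parts: the squarefree-generator argument is unavailable there, and one must verify both the bound on $|\set(u)|$ and the uniqueness of the maximiser by a direct inspection of the $c$-bounded compositions of $d$ into $k + 1$ parts.
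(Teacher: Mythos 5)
Your route is genuinely different from the paper's. The paper reduces to $t=0$ via Corollary~\ref{burned}, notes that equal graded Betti numbers force equal height, and then simply quotes the Herzog--Hibi classification of Cohen--Macaulay polymatroidal ideals \cite[Theorem 4.2]{CM} (and, for (ii), self-duality of the resolution of a Gorenstein quotient); you instead propose a self-contained computation of $\pd(S/I)=1+\max_{u}|\set(u)|$ from the linear quotients and a comparison with the height. Your reduction to $t=0$, the formula $|\set(u)|=a_r-1-\#\{j<r:\ s_j=c\}$ obtained from Lemma~\ref{gap}, and the branches $c=1$ and $d\le c$ are all correct, and the type computation in part (ii) is the right tool. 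But the proposal has genuine gaps.

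The main one is the boundary regime $m=k+1$, which you explicitly defer: the ``direct inspection'' you postpone does not come out the way the statement requires. Take $c=2$, $d=3$, $t=0$, $n=m=2$, so $k=1$ and $m=k+1$. Then $I=(x_1^2x_2,\,x_1x_2^2)$, and the generator $x_1x_2^2$ has $s_1=1<c$, hence $|\set(x_1x_2^2)|=1>0=m-k-1$, so $\pd(S/I)=2>1=\height(I)$ and $S/I$ is not Cohen--Macaulay, although $n\le t(d-1)+\lfloor (d-1)/c\rfloor+1$. In general $m\le k+1$ forces every generator to use all $m$ variables, but it forces neither $s_1=\cdots=s_k=c$ for every generator nor $|G(I)|\le 1$ (as you assert parenthetically in part (ii)); the generator is unique only when $d=mc$ or $m\le 1$. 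So the equality $\max_u|\set(u)|=m-k-1$ cannot be verified in this branch, and an honest completion of your case analysis exposes a discrepancy with the corollary as stated. (The paper's own proof is soft at exactly the same point: it replaces ``$I$ is principal'' by the strictly weaker ``$\height(I)\le 1$'' when translating into the numerical condition.) A secondary gap: in the converse direction your witness $x_{m-d+1}\cdots x_m$ needs $m\ge d$, which does not follow from the failure of all three conditions when $c\ge 2$ --- for $c=2$, $d=5$, $t=0$ the negation only gives $m\ge k+2=4<d$. One can still produce a generator with $a_r=m$ and fewer than $k$ full parts among $j<r$ whenever $m\ge k+2$, but that existence has to be argued rather than read off from the squarefree monomial.
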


\begin{proof}
(i) By Corollary \ref{burned}, the ideals $I_{c,(n,d,t)}$ and $I_{c,(n-(d-1)t,d,0)}$ have the same graded Betti numbers and hence the same  projective dimension. Therefore,  by Proposition \ref{height}, they have the same height. It follows that $I_{c,(n,d,t)}$ is Cohen-Macaulay if and only if $I=I_{c,(n-(d-1)t,d,0)}$ is Cohen-Macaulay. Therefore, the desired conclusion follows from \cite[Theorem 4.2]{CM}, which says that the Veronese type ideal is Cohen-Macaulay if and only if $I$ is either a principal ideal and hence $\height(I)\leq 1$, which  by Proposition \ref{height} means that  $n\leq t(d-1)+\lfloor\frac{d-1}{c}\rfloor+1$, or $I$ is a power of the  maximal ideal, which is the case if  and only if $d\leq c$, or $I$ is  squarefree Veronese, which is the case if and only if $c=1$.

(ii) Since  the resolution for $I_{c,(n,d,t)}$ is $d$-linear and since for Gorenstein rings the resolution for $S/I_{c,(n,d,t)}$ is self-dual it follows that $I_{c,(n,d,t)}$ is Gorenstein if and only if $d=1$ or $I_{c,(n,d,t)}$ is principal, i.e. $n\leq t(d-1)+\lfloor\frac{d-1}{c}\rfloor+1$.
\end{proof}

\section{On the powers and the fiber cone of $c$-bounded $t$-spread Veronese ideals}\label{ctVerofiber}

 We start this section by defining sorted sets of monomials, a notion due to Sturmfels \cite{Sturmfels}. The relations of toric rings generated by sortable sets are well understood.
Let $S_d$ be the $K$-vector space generated by all the monomials of degree $d$ in $S$ and let $v, w \in S_d$. We write $vw=x_{i_1}\dots x_{i_2}\dots x_{i_{2d}}$ with $i_1\leq i_2 \leq \dots \leq i_{2d}$. The {\it sorting} of the pair $(v,w)$ is the pair of monomials $(v^{\prime}, w^{\prime})$ with

\begin{center}
$v^{\prime}=x_{i_1}x_{i_3}\dots x_{i_{2d-1}}, w^{\prime}= x_{i_2}x_{i_4}\dots x_{i_{2d}}.$
\end{center}
The map
\begin{center}
$\sort: S_d\times S_d \rightarrow S_d\times S_d, (u,v)\mapsto (u^{\prime}, v^{\prime})$
\end{center}
is called the {\it sorting operator}. A pair $(u,v)$ is {\it sorted} if $\sort(u,v)=(u,v)$. Notice that if $(u,v)$ is sorted, then $u>_{lex}v$ and $\sort(u,v)=\sort(v,u)$. If $u_1=x_{i_1}\dots x_{i_d}, u_2=x_{j_1}\dots x_{j_d}, \dots, u_{r}=x_{l_1}\dots x_{l_d}$, then the $r$-tuple $(u_1, \dots, u_r)$ is sorted if and only if
\begin{eqnarray}
\label{sequence}
i_1\leq j_1\leq  \dots\leq  l_1\leq i_2\leq j_2\leq \dots \leq l_2\leq \dots \leq i_d\leq j_d\leq \dots \leq l_d,
\end{eqnarray}
which means that $(u_i, u_j)$ is sorted, for all $i>j$.
For any $r$-tuple of monomials $(u_1, \dots, u_r)$, $u_i \in S_d$, for all $i$, there exists a unique sorted $r$-tuple $(v_1,\dots, v_r)$ such that $u_1\cdots u_r=v_1\cdots v_r$, see \cite[Theorem 6.12]{EH}. We say that the product $u_1\cdots u_r$ is sorted if $(u_1, \dots, u_r)=(v_1, \dots, v_r)$.

%lead term given by unordered pairs

\begin{Theorem}
\label{sortable}
The set $G(I_{c,(n,d,t)})$ is sortable.
\end{Theorem}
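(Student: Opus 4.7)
The plan is to show directly that for any $u,v \in G(I_{c,(n,d,t)})$ the sorted pair $(u',v')=\sort(u,v)$ again lies in $G(I_{c,(n,d,t)})$. Writing $uv = x_{i_1}x_{i_2}\cdots x_{i_{2d}}$ with $i_1\leq i_2\leq \ldots\leq i_{2d}$, one has $u'=x_{i_1}x_{i_3}\cdots x_{i_{2d-1}}$ and $v'=x_{i_2}x_{i_4}\cdots x_{i_{2d}}$, both of degree $d$, so what remains is to check that each of $u',v'$ is $t$-spread and $c$-bounded.

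For the $t$-spread property I would run a pigeonhole on three consecutive merged entries. To bound $i_{2k+1}-i_{2k-1}$ from below, consider the window $i_{2k-1}\leq i_{2k}\leq i_{2k+1}$: at least two of those three entries come from the same factor, say both from $u$ at sorted multi-support positions $r<s$ of $\msupp(u)$. The $t$-spread property of $u$ then gives a gap of at least $(s-r)t\geq t$ between those two values, whence $i_{2k+1}-i_{2k-1}\geq t$. The identical argument applied to the window $i_{2k}\leq i_{2k+1}\leq i_{2k+2}$ takes care of $v'$.

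For $c$-boundedness I would first record the reformulation: a $t$-spread multiset $M$ has a block of size $>c$ if and only if some interval of length $ct$ contains more than $c$ elements of $M$. For $t\geq 1$ this is because the $t$-spread forces any $c+1$ elements inside an interval of length $ct$ to be consecutive in $M$ with successive differences all equal to $t$; for $t=0$ the ``interval'' collapses to a single point and the statement degenerates to ``some value is repeated more than $c$ times''. Assuming that $u'$ has a block of size $c+1$, some window $i_{2k-1},i_{2k+1},\ldots,i_{2k+2c-1}$ lies in $[i_{2k-1},\,i_{2k-1}+ct]$, and the even-indexed entries $i_{2k},i_{2k+2},\ldots,i_{2k+2c-2}$ are squeezed into the same interval, giving $2c+1$ merged entries in an interval of length $ct$. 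Pigeonhole yields at least $c+1$ of them from a single factor, say $u$, so $\msupp(u)$ has $c+1$ elements inside an interval of length $ct$, contradicting the $c$-bound of $u$ by the reformulation. The symmetric window $i_{2k},i_{2k+1},\ldots,i_{2k+2c}$ does the same for $v'$.

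The main obstacle is precisely this $c$-bounded step: the count $2c+1>2c$ is tight, so the interval-length reformulation has to be set up carefully so that $t\geq 1$ and $t=0$ are handled uniformly. Once that small lemma is in place, the rest is pigeonhole bookkeeping, and both $u'$ and $v'$ satisfy the defining conditions of $G(I_{c,(n,d,t)})$, proving sortability.
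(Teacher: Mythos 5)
Your proof is correct, and while it shares the unavoidable skeleton with the paper's proof (show that both halves of $\sort(u,v)$ are again $t$-spread and $c$-bounded), the way you verify $c$-boundedness is genuinely different. The paper splits into cases: for $t=0$ it bounds $\deg_{x_i}$ of each sorted factor by $\bigl\lceil (\deg_{x_i}u+\deg_{x_i}v)/2\bigr\rceil\leq c$, and for $t\geq 1$ it runs a propagation argument along a putative block $B$ of $u'$ (``if $i_{2l}\in U$ then $i_{2l+2}\in U$''), concluding the sharper structural fact that $B$ is entirely contained in $\msupp(u)$ or in $\msupp(v)$ and hence has size at most $c$. You instead characterize $c$-boundedness by interval counts (a $t$-spread multiset fails to be $c$-bounded iff some interval of length $ct$ contains more than $c$ of its elements), observe that a size-$(c+1)$ block of $u'$ squeezes $2c+1$ interlaced merged entries into an interval of length $ct$, and pigeonhole $c+1$ of them into a single factor. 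Your auxiliary lemma is correct as you sketch it: for $t\geq 1$, any $c+1$ elements of a $t$-spread multiset lying in an interval of length $ct$ must be consecutive with all gaps equal to $t$ (the total spread is at least $ct$ with equality forced), and for $t=0$ it degenerates to the multiplicity statement, which recovers the paper's $t=0$ case. What your route buys is uniformity over $t=0$ and $t\geq 1$ and the avoidance of the case analysis on whether $i_{2l+1}$ belongs to $U$; what the paper's route buys is the stronger conclusion that each block of a sorted factor sits wholly inside one of the original supports. Your $t$-spread step is the same three-entry pigeonhole that underlies the cited Proposition 3.1 of \cite{ehq}, just written out. No gaps.
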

\begin{proof}
 Let $u,v\in G(I_{c,(n,d,t)})$. Then $u=\bf{x}_{U}$ and $v=\bf{x}_{V}$ for some $U,V \in\MA_{c,(n,d,t)}$. Let $uv=x_{i_1}x_{i_2}\cdots x_{i_{2d}}$ such that $i_1\leq i_2 \leq \ldots \leq i_{2d}$. Moreover, let $v^\prime=x_{i_1}x_{i_3}\cdots x_{i_{2d-1}}$ and $u^\prime =x_{i_{2}}x_{i_4}\cdots x_{i_{2d}}$. We need to show that $u', v' \in G(I_{c,(n,d,t)})$.

If $t=0$, then note that $\deg_{x_i}(u')$ and $\deg_{x_i}(v')$ is at most $\left\lceil \frac{\deg_{x_i} (u)+ \deg_{x_i} (v)}{2}\right\rceil  $. Since $\deg_{x_i} (u), \deg_{x_i} (u) \leq c$, it shows that  $\deg_{x_i}(u')$ and $\deg_{x_i}(v')$ is also bounded by c. This shows that $u', v' \in G(I_{c,(n,d,0)})$.

 Let $t \geq 1$. It follows from \cite[Proposition 3.1]{ehq} that $u'$ and $v'$ are $t$-spread monomials. In particular, $u'$ and $v'$ are squarefree. Now we will show that if $B \subset \{i_2< i_4< \ldots< i_{2d}\}$ is a block then $|B| \leq c$. The case when $B \subset \{i_1< i_3< \ldots< i_{2d-1}\}$ follows by a similar argument.

Let $B=\{i_{2l} < i_{2(l+1)}< \ldots <i_{2(l+k)}\}$.  for some $1\leq l < l+k \leq d$. Let $A=\{i_{2l}\leq i_{2l+1}\leq i_{2l+2}\leq i_{2l+3}\leq  \ldots \leq i_{2(l+k)-1}\leq i_{2(l+k)}\}$. We have either $i_{2l} \in U$ or $i_{2l} \notin U$. Assume that  $i_{2l} \in U$ . Then we have two possibilities, namely,  either $i_{2l+1} \in U$ or $i_{2l+1} \notin U$. If $i_{2l+1} \in U$ then $i_{2l+1}-i_{2l}=t$ and $i_{2l+1}=i_{2l+2}$ because $i_{2l+2}-i_{2l}=t$ and $i_{2l} \leq i_{2l+1}\leq i_{2l+2}$. Therefore, $i_{2l+2} \in U$.
 On the other hand, if $i_{2l+1} \notin U$ then  $i_{2l}< i_{2l+1}$ and  $i_{2l+1} \in V$. Again, in this case $i_{2l+2} \in U$ because $i_{2l+2} \notin V$. Indeed $i_{2l} < i_{2l+1} \leq i_{2l+2}$ and therefore $i_{2l+2} -i_{2l+1} < t$. We see that in both cases, if $i_{2l} \in U$ then $i_{2l+2} \in U$. Continuing in the same way, we see that $B \subset U$ and hence $|B| \leq c$.

Now assume that  $i_{2l} \notin U$. Then $i_{2l} \in V$ and by following the same argument as above, we conclude that  $B \subset V$ and $|B| \leq c$.

   %Note that  difference between any consecutive elements in $A$ is at most $t$  because $i_{2r+2}-i_{2r}=t$ and $i_{2r} \leq i_{2r+1}\leq i_{2r+2}$.

 % This shows that  $|A \cap U| \leq k$ and $|A \cap V| \leq k$. Therefore, either $|A \cap U| = k$  and $|A \cap V| \leq k-1$ or $|A \cap V| = k$  and $|A \cap U| \leq k-1$. We may assume that  $|A \cap U| = k$  and $|A \cap V| \leq k-1$. Then  $A \cap U = B$ which shows that $|B|\leq c$ and $u'  \in G(I_{c,(n,d,t)})$. A similar argument shows that  $v'  \in G(I_{c,(n,d,t)})$.
\end{proof}

Let $I$ be an equigenerated monomial ideal.  We consider the polynomial ring $T=K[\{t_u \:\; u\in G(I)\}]$ in $|G(I)|$ variables.  We denote by $K[I]$ the $K$-algebra which  is generated by the set of monomials $G(I)$.  The kernel of the $K$-algebra homomorphism
\begin{center}
$\varphi : T \rightarrow K[I]$, $\varphi(t_u)=u,$ for $u\in G(I)$
\end{center}
is a binomial ideal and is called the defining ideal of $K[I]$.

\begin{Corollary}
\label{koszul}
$K[I_{c,(n,d,t)}]$ is Koszul and a Cohen-Macaulay normal domain.
\end{Corollary}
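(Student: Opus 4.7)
The plan is to deduce Corollary~\ref{koszul} directly from Theorem~\ref{sortable} by invoking the standard package of results on sortable sets due to Sturmfels.

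First I would recall the key fact (see \cite{Sturmfels} or \cite[Theorem~6.16]{EH}): if $B$ is a sortable set of monomials of equal degree, then the defining ideal of $K[B]$ has a quadratic Gröbner basis with respect to the \emph{sorting order} on $T=K[\{t_u : u \in B\}]$, consisting precisely of the binomials
\[
t_u t_v - t_{u'} t_{v'}, \qquad \text{where } (u,v) \text{ is unsorted and } (u',v')=\sort(u,v).
\]
Since Theorem~\ref{sortable} tells us that $G(I_{c,(n,d,t)})$ is sortable, this applies verbatim to $K[I_{c,(n,d,t)}]$.

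From this Gröbner basis statement the three assertions follow by standard arguments, which I would simply cite:
\begin{enumerate}[(i)]
\item \textbf{Koszul.} A standard graded $K$-algebra whose defining ideal admits a quadratic Gröbner basis is Koszul (see, e.g., \cite[Theorem~6.7]{EH}).
\item \textbf{Normal.} The binomials above have squarefree initial terms $t_u t_v$, so the initial ideal of the toric ideal is squarefree. By a theorem of Sturmfels, a toric ring whose defining ideal has a squarefree initial ideal is a normal affine semigroup ring.
\item \textbf{Cohen--Macaulay.} By Hochster's theorem, every normal affine semigroup ring is Cohen--Macaulay.
\end{enumerate}
That $K[I_{c,(n,d,t)}]$ is a domain is automatic since it is a subalgebra of the polynomial ring $S$.

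No obstacle is expected here: all the work has been absorbed into Theorem~\ref{sortable}, and the remainder is a one-line invocation of Sturmfels' theorem plus Hochster's theorem. The proof is essentially a citation chain.
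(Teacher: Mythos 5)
Your proposal is correct and takes essentially the same route as the paper: both deduce from Theorem~\ref{sortable} that the defining ideal has a quadratic Gr\"obner basis of sorting relations with squarefree initial terms, then obtain Koszulness from the quadratic Gr\"obner basis (the paper cites Fr\"oberg for this step), normality from Sturmfels' squarefree-initial-ideal criterion, and Cohen--Macaulayness from Hochster's theorem. The citation chain is identical in substance, so there is nothing to add.
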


\begin{proof}
We use the fact that the sorting relations form a quadratic Gr{\"o}bner basis of the defining $J$ of $K[I_{c,(n,d,t)}]$  with respect to the sorting order, see
\cite[Theorem 6.15]{EH} and \cite[Theorem 6.16]{EH}.  Then, by using a result of Fr\"oberg \cite{Froberg}, the algebra is Koszul. Since the initial ideal of $J$ with respect to the sorting order is squarefree it follow from  \cite{Sturmfels}, see also \cite[Corollary 4.26]{binomialideals},  that $K[I_{c,(n,d,t)}]$ is normal, and by \cite{Hochster} this implies  that it is Cohen-Macaulay.
\end{proof}

In order to study the powers of the ideals $I_{c,(n,d,t)}$, we have to understand the structure of the Rees algebra of such ideals. To do this, we use the so-called {\it $l$-exchange property}, see \cite{HHV} or \cite[Section 6.4]{EH}.

Let $\mathcal{R}(I)=\bigoplus_{j\geq 0}I^{j}t^j\subset S[t]$ be the Rees algebra of  an equigenerated monomial  ideal $I$. Since $I$ is equigenerated,  the fiber $\mathcal{R}(I)/\mathfrak{m}\mathcal{R}(I)$ of the Rees algebra $\mathcal{R}(I)$ is isomorphic to the toric $K$-algebra $K[I]$.

The Rees ring $\mathcal{R}(I)$ has the presentation
\begin{center}
$\psi : R=S[\{t_u \:\; u\in G(I)\}] \rightarrow \mathcal{R}(I)$,
\end{center}
defined by
\begin{center}
$x_i \mapsto x_i$, for $1\leq i\leq n$ and $t_v \mapsto vt$, for $v\in G(I)$.
\end{center}

Let $<$ be a monomial order on $T$. We call a monomial $t_{u_1}\cdots t_{u_N}\in T$ {\it standard with respect to $<$},  if it does not belong to the initial ideal of the defining ideal $J\subset T$  of $K[I]$.

\begin{Definition}\cite{HHV} \label{lexchange}
A monomial ideal $I\subset S$ satisfies the $l$-exchange property with respect to the monomial order $<$ on $T$,  if the following conditions are satisfied: let $t_{u_1}\cdots t_{u_N}, t_{v_1}\cdots t_{v_N}$ be two standard monomials in $T$  of degree $N$  with respect to $<$ such that:
\begin{enumerate}[{(i)}]

\item $\deg_{x_i}u_1\cdots u_N=\deg_{x_i}v_1\cdots v_N$, for $1\leq i \leq q-1$ with $q\leq n-1$.
\item $\deg_{x_q}u_1\cdots u_N<\deg_{x_q}v_1\cdots v_N$.
\end{enumerate}

Then there exist integers $\delta, j$ with $q<j\leq n$ and $j\in \supp(u_{\delta})$ such that $x_q u_{\delta}/x_j \in I$.
\end{Definition}

\begin{Theorem}\label{thmx}
$I_{c,(n,d,t)}$ satisfies the $l$-exchange property with respect to the sorting order $<_{\sort}$.
\end{Theorem}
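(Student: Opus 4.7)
The plan is to exploit the fact that the sorting relations form a quadratic Gr\"obner basis of the defining ideal of $K[I_{c,(n,d,t)}]$ with respect to $<_{\sort}$ (see \cite[Theorem~6.15]{EH}), so that a monomial $t_{u_1}\cdots t_{u_N}\in T$ is standard if and only if the $N$-tuple $(u_1,\ldots,u_N)$ is sorted. I will therefore assume that both $(u_1,\ldots,u_N)$ and $(v_1,\ldots,v_N)$ are sorted and write $u_\delta=x_{i^{(\delta)}_1}\cdots x_{i^{(\delta)}_d}$ and $v_\delta=x_{j^{(\delta)}_1}\cdots x_{j^{(\delta)}_d}$ with the exponents of each monomial in nondecreasing order. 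Sortedness then says that the $Nd$ entries read column by column form a nondecreasing sequence. Put $a_i=\deg_{x_i}(u_1\cdots u_N)$ and $b_i=\deg_{x_i}(v_1\cdots v_N)$; hypotheses (i) and (ii) read $a_i=b_i$ for $i<q$ and $a_q<b_q$. Let $P=1+\sum_{i<q}a_i$, $P'=P+a_q$, and write $P'=(k-1)N+\delta$ with $1\le\delta\le N$.

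The crucial observation is that the first $P'-1$ entries of the two column-readings are identical: they share the multiset $\{1^{a_1},\ldots,(q-1)^{a_{q-1}},q^{a_q}\}$ and both are sorted. At position $P'$ itself, the $v$-reading is still $q$ because $a_q<b_q$, whereas the $u$-reading has moved past $q$. In matrix notation this reads
\[
i^{(\delta)}_m=j^{(\delta)}_m\ \text{ for }\ m<k,\qquad j^{(\delta)}_k=q,\qquad i^{(\delta)}_k>q.
\]
I will then set $j:=i^{(\delta)}_k>q$ and $u'_\delta:=x_q u_\delta/x_j$, and the entire task reduces to showing $u'_\delta\in I_{c,(n,d,t)}$.

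The nondecreasing index list of $u'_\delta$ is $(i^{(\delta)}_1,\ldots,i^{(\delta)}_{k-1},q,i^{(\delta)}_{k+1},\ldots,i^{(\delta)}_d)$, which by the identification above agrees with the first $k$ indices of $v_\delta$. For the $t$-spread check I would use $i^{(\delta)}_{k+1}-q>i^{(\delta)}_{k+1}-i^{(\delta)}_k\ge t$ on the right, and rewrite $q-i^{(\delta)}_{k-1}=j^{(\delta)}_k-j^{(\delta)}_{k-1}\ge t$ on the left, the latter supplied by the $t$-spread property of $v_\delta$. For the $c$-bound, any block of $u'_\delta$ avoiding position $k$ is contained in the corresponding block of $u_\delta$ and hence has size at most $c$, while the unique block of $u'_\delta$ containing $q$ is read off from $\{i^{(\delta)}_1,\ldots,i^{(\delta)}_{k-1},q\}=\{j^{(\delta)}_1,\ldots,j^{(\delta)}_k\}$, so it is contained in the block of $q$ in $v_\delta$, of size at most $c$. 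The argument is uniform in $t\ge 0$, since for $t=0$ the ``chain'' $q,q-t,q-2t,\ldots$ degenerates to repeated copies of $q$ and the block size is $\deg_{x_q}(u'_\delta)\le\deg_{x_q}(v_\delta)\le c$. The boundary cases $k=1$ and $k=d$ are handled by omitting the vacuous spread or block check. The main obstacle is establishing the prefix identification $i^{(\delta)}_m=j^{(\delta)}_m$ for $m<k$, which is precisely what allows the $t$-spread and block data of $v_\delta$ to the left of $q$ to transfer verbatim to $u'_\delta$; once this is in place the verification becomes routine bookkeeping.
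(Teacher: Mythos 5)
Your proposal is correct and follows essentially the same route as the paper: both reduce to a single $\delta$ and position $k$ with $u_\delta$ and $v_\delta$ sharing their first $k-1$ sorted indices, $j^{(\delta)}_k=q<i^{(\delta)}_k$, and then verify that $x_qu_\delta/x_{i^{(\delta)}_k}$ is $t$-spread and $c$-bounded by splitting its multi-support into a prefix inherited from $v_\delta$ and a suffix inherited from $u_\delta$, using $i^{(\delta)}_{k+1}-q>t$ at the junction. Your explicit localization of $\delta$ and $k$ via the position $P'$ in the merged column-reading is just a more concrete packaging of the paper's deduction that $\deg_{x_i}(u_\gamma)=\deg_{x_i}(v_\gamma)$ for all $\gamma$ and $i<q$ together with a pigeonhole choice of $\delta$.
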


\begin{proof}
We use similar arguments as in \cite[Proposition 2.2]{bahareh}. Let $t_{u_1}\cdots t_{u_N}, t_{v_1}\cdots t_{v_N}\in K[\{t_u \:\; u\in G(I)\}]$ be two standard monomials of degree $N$ satisfying (i) and (ii) of Definition \ref{lexchange}. Since $t_{u_1}\cdots t_{u_N}, t_{v_1}\cdots t_{v_N}$ does not belong to the initial ideal of $J$ with respect to the sorting order, the products $u_1\cdots u_N$ and $v_1\cdots v_N$ are sorted. Condition (i) together with (\ref{sequence}) implies that
\begin{eqnarray}\label{Condition1}
\text{$\deg_{x_i}(u_{\gamma})=\deg_{x_i}(v_{\gamma})$, for all $1\leq \gamma\leq N$ and $1\leq i \leq q-1$,}
\end{eqnarray}
and Condition (ii) implies that there exists $1\leq \delta\leq N$ such that
\begin{eqnarray}\label{Condition2}
\text{$\deg_{x_q}(u_{\delta})<\deg_{x_q}(v_{\delta})$.}
\end{eqnarray}

Let $u_{\delta}=x_{j_1}\cdots x_{j_d}$,  $v_{\delta}=x_{l_1}\cdots x_{l_d}$. Then from (\ref{Condition1}) and (\ref{Condition2}), we see that there exists $k$ such that $j_1=l_1, \dots, j_{k-1}=l_{k-1}$ and $j_k>l_k$. Then $l_k=q$.   We need to show that $x_qu_{\delta}/x_j \in I_{c,(n,d,t)}$, for some $j\in \supp(u_{\delta})$ with $q<j$.

Take $j=j_k$. Then  $x_qu_{\delta}/x_j= x_{j_1}\cdots x_{j_{k-1}}x_q x_{j_{k+1}}\cdots x_{j_d}=x_{l_1}\cdots x_{l_{k-1}}x_{q}x_{j_{k+1}}\cdots x_{j_d}$.  To see $w=x_qu_{\delta}/x_j$ is $t$-spread, we only need to check $j_{k+1}-q \geq t$. Indeed, $j_{k+1}-q =j_{k+1}-l_k > j_{k+1}-j_k \geq t$. Moreover, $w$ is $c$-bounded because $\{l_1\leq l_2 \leq \ldots \leq q\}\subset \msupp(v_{\delta})$ and $\{j_{k+1} \leq \ldots \leq j_d\} \subset \msupp(u_{\delta})$ are  $c$-bounded, and $j_{k+1}- q >t$.
 \end{proof}

For the sortable ideal $I_{c,(n,d,t)}$  we consider the sorting order $<_{\sort}$ on $T$ and the lexicographic order $<_{\lex}$ on $S$. Let $<$ be the monomial order on $R$ defined as follows: if $s_1, s_2 \in S$ and $t_1, t_2 \in T$ be monomials, then $s_1t_1>s_2t_2$ if $s_1>_{lex}s_2$ or $s_1=s_2$ and $t_1>_{sort}t_2$.

\medskip
Let $P\subset R$ be the defining ideal of $\mathcal{R}(I_{c,(n,d,t)})$. The following result shows that $P$   has a quadratic Gr\"obner basis and the initial ideal of $P$ is squarefree.

\begin{Theorem} \label{GB}
The reduced Gr\"obner basis of the toric ideal $P$ with respect to $<$ defined before  consists of the set of binomials $t_ut_v- t_{u^{\prime}}t_{v^{\prime}}$, where $(u, v)$ is unsorted and $(u^{\prime}, v^{\prime})=\sort(u,v)$ and the set of binomials of the form $x_i t_u-x_j t_v$, where $i<j$, $x_iu=x_jv$, and $j$ is the largest integer for which $x_i v/x_j \in G(I)$.

\begin{proof}
The result follows from Theorem \ref{thmx} and \cite[Theorem 5.1]{HHV}.
\end{proof}
\end{Theorem}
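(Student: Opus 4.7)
The plan is to apply the general framework of Herzog--Hibi--Vladoiu \cite[Theorem 5.1]{HHV} directly, since the two hypotheses required by that theorem for an equigenerated monomial ideal $I$ — namely, sortability of $G(I)$ and the $l$-exchange property with respect to the sorting order — have already been established for $I_{c,(n,d,t)}$ in Theorem~\ref{sortable} and Theorem~\ref{thmx} respectively. The content of \cite[Theorem 5.1]{HHV} is exactly that, under these hypotheses, the defining toric ideal $P$ of the Rees algebra $\mathcal{R}(I)$ admits the combinatorially described Gröbner basis we claim, with respect to the product order that combines $<_{\lex}$ on the $x$-variables with $<_{\sort}$ on the $t$-variables. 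So the argument is really an invocation of an existing general result.

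First I would record the two hypotheses. Theorem~\ref{sortable} says $G(I_{c,(n,d,t)})$ is sortable, which by \cite[Theorem 6.16]{EH} (used in Corollary~\ref{koszul}) means that the set of binomials $t_u t_v - t_{u'} t_{v'}$ with $(u,v)$ unsorted and $(u',v') = \sort(u,v)$ forms a Gröbner basis of the defining ideal $J$ of the fiber cone $K[I_{c,(n,d,t)}]$ with respect to $<_{\sort}$, and its leading term is $t_u t_v$. Theorem~\ref{thmx} supplies the $l$-exchange property: any pair of standard monomials of equal degree whose underlying $x$-content agrees up to column $q-1$ but differs at column $q$ admits an exchange producing a generator $x_q u_\delta / x_j \in G(I)$ with $j > q$ in $\supp(u_\delta)$.

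Next I would describe how these two ingredients yield the two families of binomials in $P$. The first family, the sorting binomials, are relations already present in $J \subset T$ and pull back to $P \subset R$ unchanged; by the choice of the composite order $<$ (which refines $<_{\sort}$ inside the $t$-variables when the $x$-parts agree), their leading terms remain $t_u t_v$. The second family records the Rees-algebra relations proper: for each $u \in G(I)$ and each $i < \max(\supp(u))$ which is admissible (so that some $x_q u/x_j \in G(I)$), the $l$-exchange property produces precisely the binomials $x_i t_u - x_j t_v$ with $x_i u = x_j v$ and $j$ maximal such that $x_i v / x_j \in G(I)$. Maximality of $j$ is what forces the leading term to be $x_i t_u$ under the composite order (note $j > i$, so $x_i >_{\lex} x_j$, hence $x_i t_u >  x_j t_v$).

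Finally, to verify that this is indeed \emph{the reduced} Gröbner basis, I would invoke the standard test: no leading monomial in the candidate basis divides a non-leading monomial of another element, and every leading coefficient is $1$. Both conditions are built into the statement — the sorting binomials have pure $t$-monomial leading terms $t_u t_v$ while the exchange binomials have mixed leading terms $x_i t_u$, and the maximality condition on $j$ in the exchange binomials ensures no redundancy within that family. The one point requiring care, and the main potential obstacle, is the verification that the product order $<$ defined in the paragraph preceding Theorem~\ref{GB} fits exactly the setup of \cite[Theorem 5.1]{HHV}; so in a careful write-up I would spend a sentence matching conventions, but no new combinatorics is needed beyond Theorems~\ref{sortable} and \ref{thmx}.
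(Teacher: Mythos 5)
Your proposal is correct and follows exactly the paper's own route: the paper's entire proof is the one-line invocation of Theorem~\ref{thmx} together with \cite[Theorem 5.1]{HHV}, with the sortability from Theorem~\ref{sortable} supplying the Gr\"obner basis of the fiber-cone ideal via \cite[Theorem 6.16]{EH}. Your additional remarks on matching the composite order and checking reducedness are sensible elaborations of the same argument, not a different approach.
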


We have the following consequences:
\begin{Corollary} \label{ReesCM}
The Rees algebra $\mathcal{R}(I_{c,(n,d,t)})$ is a normal Cohen-Macaulay domain.
\end{Corollary}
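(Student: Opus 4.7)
The plan is to read off the corollary directly from the Gröbner basis information supplied by Theorem~\ref{GB}, following the same two-step pattern that was used to prove Corollary~\ref{koszul} for the fiber cone $K[I_{c,(n,d,t)}]$. First I would note that $\mathcal{R}(I_{c,(n,d,t)})=\bigoplus_{j\geq 0} I_{c,(n,d,t)}^{\,j}t^{j}$ sits as a subring of $S[t]$ generated by monomials (namely $x_1,\dots,x_n$ and $ut$ for $u\in G(I_{c,(n,d,t)})$), hence it is an affine semigroup $K$-algebra and in particular an integral domain.

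Next I would use the Gröbner basis description from Theorem~\ref{GB}: the defining toric ideal $P\subset R$ of $\mathcal{R}(I_{c,(n,d,t)})$ has, with respect to the composite order $<$, a Gröbner basis consisting of the sorting binomials $t_u t_v - t_{u'}t_{v'}$ and the exchange binomials $x_i t_u - x_j t_v$. Under $<$ the leading terms are $t_u t_v$ (for unsorted $(u,v)$) and $x_j t_v$, all of which are squarefree monomials in the variables of $R$. Therefore $\mathrm{in}_<(P)$ is a squarefree monomial ideal. By Sturmfels' criterion (see \cite{Sturmfels} and \cite[Corollary~4.26]{binomialideals}), a toric ideal admitting a squarefree initial ideal defines a normal affine semigroup ring, so $\mathcal{R}(I_{c,(n,d,t)})$ is a normal domain.

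Finally, Hochster's theorem \cite{Hochster} guarantees that every normal affine semigroup ring is Cohen--Macaulay, which completes the argument.

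There is essentially no obstacle at this stage: the real work was absorbed into the preceding results. The sortability of $G(I_{c,(n,d,t)})$ (Theorem~\ref{sortable}) gives the quadratic sorting part of the Gröbner basis, the $\ell$-exchange property (Theorem~\ref{thmx}) produces the linear-in-$x$ exchange binomials, and Theorem~\ref{GB} packages these together while ensuring squarefreeness of the leading terms. Once those inputs are available, the passage from ``squarefree initial ideal'' to ``normal + Cohen--Macaulay'' via Sturmfels and Hochster is entirely formal, exactly parallel to the fiber-cone case handled in Corollary~\ref{koszul}.
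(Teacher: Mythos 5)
Your proposal is correct and matches the paper's (unwritten but clearly intended) argument: Theorem~\ref{GB} gives a quadratic Gr\"obner basis of $P$ with squarefree initial ideal, whence normality by Sturmfels' criterion and Cohen--Macaulayness by Hochster, with the domain property coming from $\mathcal{R}(I_{c,(n,d,t)})$ being a toric ring. The only nitpick is that under the paper's order the leading term of $x_i t_u - x_j t_v$ is $x_i t_u$ rather than $x_j t_v$ (since $i<j$ gives $x_i >_{\lex} x_j$), but both monomials are squarefree so the argument is unaffected.
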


\begin{Corollary}
\label{strongpersistence}
$I_{c,(n,d,t)}$ satisfies the strong persistence property and all powers of $I_{c,(n,d,t)}$ have linear resolution.
\end{Corollary}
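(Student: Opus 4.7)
The plan is to deduce both assertions from the two structural results just established: the normality of the Rees algebra $\mathcal{R}(I_{c,(n,d,t)})$ from Corollary~\ref{ReesCM}, and the explicit form of the reduced Gr\"obner basis of its defining ideal $P$ from Theorem~\ref{GB}.

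For the strong persistence property, I would invoke the general principle that a monomial ideal whose Rees algebra is a normal domain is itself a normal ideal---every power is integrally closed---and consequently satisfies $(I^{k+1}:I)=I^k$ for every $k\geq 1$. Since $\mathcal{R}(I_{c,(n,d,t)})$ is a normal domain by Corollary~\ref{ReesCM}, strong persistence is immediate.

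For the linear resolution of every power, I would apply the \emph{$x$-condition} of Herzog--Hibi--Zheng: if the defining ideal $P$ of $\mathcal{R}(I)$ admits a Gr\"obner basis, with respect to a monomial order on $R=S[t_u : u\in G(I)]$ that respects the splitting of the variables into the $x$'s and the $t$'s, all of whose elements have total $x$-degree at most $1$, then $I^k$ has a linear resolution for every $k\geq 1$. The composite order $<$ introduced just before Theorem~\ref{GB}---first comparing $x$-parts by the lexicographic order on $S$, then $t$-parts by the sorting order on $T$---meets the hypothesis on the order, and the Gr\"obner basis produced by Theorem~\ref{GB} has exactly the required shape: the sorting binomials $t_u t_v - t_{u'} t_{v'}$ are of $x$-degree $0$, while the exchange binomials $x_i t_u - x_j t_v$ are of $x$-degree $1$.

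Neither step poses a serious combinatorial obstacle, since both amount to black-box applications of standard general theorems whose hypotheses have already been verified. The only delicate point, should it need to be spelled out, is to pin down the precise reference for the implication ``normal Rees algebra $\Rightarrow$ strong persistence'' in the monomial-ideal setting; the substantive work has already been done in Corollary~\ref{ReesCM} and Theorem~\ref{GB}.
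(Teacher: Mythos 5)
Your proposal is correct and takes essentially the same route as the paper: the paper obtains strong persistence from Theorem~\ref{GB} together with \cite[Corollary 1.6]{JQ}, whose content is exactly the ``normal ideal $\Rightarrow$ strong persistence'' principle you invoke (and which is the reference you were looking for), and it obtains the linear resolutions of all powers from Theorem~\ref{GB} and the $x$-condition \cite[Theorem 10.1.9]{monomialideals}, precisely as you describe.
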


\begin{proof}
The first part follows from Theorem \ref{GB} and \cite[Corollary 1.6]{JQ} and the second part follows from Theorem \ref{GB} and \cite[Theorem 10.1.9]{monomialideals}.
\end{proof}
%analytic spread, relation graph

\section{The analytic spread of $c$-bounded $t$-spread Veronese ideals}

By Corollary~\ref{ReesCM}, the Rees algebra of $I_{c,(n,d,t)}$ is Cohen-Macaulay. Therefore  $\lim_{s\to\infty}\depth S/I^s=n-\ell(I_{c,(n,d,t)})$, see \cite[Proposition 3.3]{EisenbudHuneke}.  Here for a graded  ideal $I\subset S$, we denote by  $\ell(I)$  the analytic spread of $I$, which by  definition is the Krull dimension of  $\mathcal{R}(I)/\mm \mathcal{R}(I)$, where $\mm$ denotes the graded maximal ideal of $S$.  The aim of this section is to give an explicit formula for $\ell(I_{c,(n,d,t)})$.

\begin{Definition}
Let $G(I)=\{u_1, \dots, u_m\}$. {\em The linear relation graph} $\Gamma$ of $I$ is the graph with the edge set
\[
E(\Gamma)=\{\{i,j\} \:\; \text{ there exist } u_k, u_l \in G(I) \text{ such that } x_i u_k=x_j u_l\}.
\]
\end{Definition}
 \begin{Definition}
{\em The analytic spread} of an ideal $I$, $l(I)$, is the Krull dimension of the fiber ring $\mathcal{R}(I)/\mathfrak{m}\mathcal{R}(I)$.
\end{Definition}
To compute the analytic spread of $I_{c, (n,d,t)}$, which is the Krull dimension of $K[I_{c, (n,d,t)}]$, we use the following result, which is a generalization of \cite[Lemma~4.2]{JQ}.

\begin{Lemma}
\label{better}
Let $I$ be a monomial ideal with linear relations generated in a single degree whose linear relation graph $\Gamma$ has $r$ vertices and $s$ connected components. Then
\[
\ell(I)=r-s+1.
\]
\end{Lemma}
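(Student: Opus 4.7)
The plan is to reduce the lemma to a lattice rank computation. Since $I$ is generated in a single degree $d$ by monomials $u_1,\ldots,u_m$ with exponent vectors $v_1,\ldots,v_m\in\NN^n$, the fiber cone is the standard graded toric domain $K[I]=K[u_1,\ldots,u_m]$, and standard toric theory gives
\[
\ell(I)=\dim K[I]=\rank_\ZZ\langle v_1,\ldots,v_m\rangle=1+\rank_\ZZ V,
\]
where $V=\langle v_k-v_l : 1\leq k,l\leq m\rangle_\ZZ\subseteq\ZZ^n$; the extra $1$ appears because each $v_k$ has coordinate sum $d\ne 0$, so $\ZZ v_1\cap V=0$. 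Thus the lemma reduces to showing $\rank_\ZZ V=r-s$.

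I will compare $V$ with $L=\langle e_i-e_j : \{i,j\}\in E(\Gamma)\rangle_\ZZ$. Standard graph theory identifies $\rank_\ZZ L$ with the rank of the signed incidence matrix of $\Gamma$, which equals $r-s$, so it suffices to prove $V=L$. The inclusion $L\subseteq V$ is immediate: each edge $\{i,j\}\in E(\Gamma)$ arises from generators $u_k,u_l$ with $x_iu_k=x_ju_l$, forcing $e_i-e_j=v_l-v_k\in V$. The reverse inclusion $V\subseteq L$ is the heart of the argument and is where the linear relations hypothesis is used.

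To prove $V\subseteq L$, introduce on $G(I)$ the equivalence relation $\sim$ generated by $u\sim u'$ whenever $x_iu=x_ju'$ for some $i,j$. If $u_k\sim u_l$ through a chain of such elementary steps, then the consecutive differences $v_{k_{p+1}}-v_{k_p}$ equal $\pm(e_{i_p}-e_{j_p})$ with $\{i_p,j_p\}\in E(\Gamma)$, and telescoping gives $v_l-v_k\in L$. Hence it suffices to prove the key claim that \emph{all minimal generators of $I$ lie in a single $\sim$-class.}

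I will establish the claim by contradiction. Suppose $G(I)=G_1\sqcup G_2\sqcup\cdots$ is a nontrivial partition into $\sim$-classes, write $I_a=(G_a)$, and decompose the free cover $F_0=\bigoplus_v ST_v\to I$ accordingly as $F_0=\bigoplus_a F_0^{(a)}$ with $F_0^{(a)}=\bigoplus_{v\in G_a}ST_v$; the restriction of the canonical map to $F_0^{(a)}$ has image $I_a$, with syzygy module $Z(I_a)\subseteq F_0^{(a)}$. Every linear syzygy $x_iT_{u_a}-x_jT_{u_b}$ of $I$ forces $u_a\sim u_b$, hence lies in $Z(I_c)$ for the common class $c$; since $I$ has linear relations, the entire syzygy module of $I$ is the $S$-span of these linear syzygies and therefore sits inside $\bigoplus_a Z(I_a)$. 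Now pick $u\in G_1$, $u'\in G_2$, set $w=\gcd(u,u')$, and consider the Koszul syzygy $\sigma_{u,u'}=(u'/w)T_u-(u/w)T_{u'}$. Its $F_0^{(1)}$-component $(u'/w)T_u$ maps to $\lcm(u,u')\ne 0$ in $I_1$, so $\sigma_{u,u'}\notin\bigoplus_a Z(I_a)$, contradicting the previous inclusion. The main obstacle is exactly this last step, where one must leverage the linear relations hypothesis just strongly enough to see that Koszul syzygies crossing different $\sim$-classes obstruct any decomposition of the syzygy module along those classes.
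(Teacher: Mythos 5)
Your proof is correct, and its overall skeleton coincides with the paper's: both reduce $\ell(I)$ to $1+\dim W$, where $W$ is the span of the exponent-vector differences $\pm(\varepsilon_i-\varepsilon_j)$ arising from linear syzygies, both invoke \cite[Lemma 4.2]{JQ} (equivalently, the rank of the incidence matrix of $\Gamma$) to get $\dim W=r-s$, and in both cases the real content is showing that \emph{every} difference $v_k-v_l$ of exponent vectors already lies in $W$. Where you genuinely diverge is in how that connectivity statement is proved. The paper expands the Koszul syzygy $\frac{u_l}{\gcd(u_k,u_l)}e_k-\frac{u_k}{\gcd(u_k,u_l)}e_l$ as an $S$-combination of the linear generators of the syzygy module and then, via a multidegree-and-cancellation argument, reorders the summands into a telescoping chain $e_k=e_{\alpha_1},e_{\alpha_2},\dots,e_{\alpha_m}=e_l$ of generators linked by linear syzygies. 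You instead argue by contradiction: if the generators split into more than one class under the equivalence generated by linear syzygies, the whole syzygy module would have to sit inside $\bigoplus_a Z(I_a)$, and the cross-class Koszul syzygy visibly does not. Both arguments rest on exactly the same two inputs --- the existence of Koszul syzygies between any two generators and the generation of the syzygy module in degree $d+1$ --- but your direct-sum formulation sidesteps the paper's somewhat informal rearrangement step and is easier to make fully rigorous, while the paper's version has the mild advantage of exhibiting the connecting chain explicitly.
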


\begin{proof}
Let $G(I)=\{u_1, \dots, u_q\}$ with $u_i=\mathbf{x}^{a_i}$ for all $i$, and $\mathcal{M}=\{a_1, \dots, a_q\}$ be the set of exponent vectors of $G(I)$. We denote by $V$ the $\mathbb{Q}$-vector space generated by the vectors $a_1, \dots, a_q$. It is clear that $\ell(I)$ is the dimension of $V$.
Let $W \subset V$ be the $\mathbb{Q}$-vector space spanned by all the vectors $a_k-a_l$ with $a_k, a_l \in \mathcal{M}$ such that $a_k-a_l= \pm \varepsilon _{ij}=\pm (\varepsilon_i -\varepsilon_j)$, for some $i<j$. In \cite[Lemma 4.2]{JQ}, it was showed that $\dim W=r-s$.

We show that $a_i-a_j \in W$ for all $i$ and $j$. We may assume $i\neq j$. Since $I$ is multigraded, there exists an exact sequence of multigraded modules

\[
0\To U\To \Dirsum_{i=1}^m Se_i \To I \To 0
\]
with $e_i \mapsto u_i$ for $i=1,\dots,m$ and the multidegree of $e_i$, denoted $\Deg(e_i)$, is equal to $a_i$.

Since $I$ has linear relations, $U$ is generated by homogeneous relations of the form $r=x_ke_{\alpha}-x_le_{\beta}$. Here $\Deg(r)=\varepsilon_k+a_{\alpha}=\varepsilon_l+a_{\beta}$. Hence $a_{\alpha}-a_{\beta}\in W$ for the generating relations of $I$. Note that

\[
r_{ij}=\frac{u_j}{\gcd(u_i, u_j)}e_i-\frac{u_i}{\gcd(u_i, u_j)}e_j \in U.
\]
Thus, we can write
\[
r_{ij}=\sum_{t=1}^m \widetilde{w}_{t}(x_{k_{t}}e_{\alpha_{t}}-x_{l_{t}}e_{\beta_{t}}),
\]
with $\Deg(\widetilde{w}_{t}(x_{k_{t}}e_{\alpha_{t}}-x_{l_{t}}e_{\beta_{t}}))= \Deg(r_{ij})$ for all $t$.
Thus
\[
r_{ij}=\sum_{t=1}^m (v_{t}e_{\alpha_{t}}-w_{t}e_{\beta_{t}}),
\]
where $v_t= \widetilde{w}_tx_{k_t}$ and $w_t=\widetilde{w}_tx_{l_t}$.

 Moreover, all summands are homogeneous of multidegree $\Deg(r_{ij})$.

Since $r_{ij}$ contains only the basis elements $e_i$ and $e_j$, the other basis elements in this sum must cancel each other. Therefore, we can write this sum as
\[
r_{ij}=\sum_{t=1}^m (v_{t}e_{\alpha_{t}}-w_{t}e_{\alpha_{t+1}}),
\]
where $\alpha_1=i$ and $\alpha_m=j$. Since $w_{t}e_{\alpha_{t+1}}$ has the same multidegree as $v_{t+1}e_{\alpha_{t+1}}$, it follows that $w_t=v_{t+1}$, for all $i=1,\dots, m-1$. Thus
\[
r_{ij}=\sum_{t=1}^m (v_{t}e_{\alpha_{t}}-v_{t+1}e_{\alpha_{t+1}}).
\]
Let $b_t=\Deg v_t$ for all $t$. Then

\[
b_{t+1}-b_t= \Deg(e_{\alpha_t})-\Deg(e_{\alpha_{t+1}}) \in U.
\]
Therefore,

\[
a_i-a_j=b_m-b_1= \sum_{t=1}^m (b_{t+1}-b_t) \in W.
\]

Since $a_1, \dots, a_q$ is a $\mathbb{Q}$-basis of $V$, it follows that $a_1\notin W$. Hence, $V=W+a_1\mathbb{Q}$, which leads to

\[
\ell(I)=\dim V= \dim W+1=r-s+1.
\]
\end{proof}

Now we apply this Lemma \ref{better} to obtain an explicit formula for the analytic spread of $I_{c,(n,d,t)}$. In order to do this, we first introduce a partial order $\prec$ on $G(I_{c,(n,d,t)})$. Let $u, v\in G(I_{c,(n,d,t)})$. We say {\it $u$ covers $v$} with respect to $\prec$ if there exists $i<j$ such that $x_j$ divides $u$ and $v=x_i\frac{u}{x_j}$.

\begin{Theorem}\label{maxelem}
For integers $d>0$ and $c>0$, write  $d=kc+r$ with integers $k\geq 0$ and  $0<r\leq c$. In other words, $k=\lfloor (d-1)/c\rfloor$. Then the following holds:

The partially ordered set $G(I_{c,(n,d,t)})$ has a unique maximal element, namely $x_{a_1}\cdots x_{a_d}$ with
 \[
a_{i}=n-k-t(d-1)+(i-1)t+\left\lceil \dfrac{i-r}{c}\right \rceil =a_1 +(i-1)t+\left\lceil \dfrac{i-r}{c}\right  \rceil
\]
for $i=1,\ldots,d$.  In other words,
\[
a_{d-i+1}=n-(i-1)t-\left \lfloor \frac{i-1}{c}\right \rfloor,
\]
 and it has  a unique minimal element, namely $x_{{\alpha}_1}\cdots x_{{\alpha}_d}$ with
\[
{\alpha}_i=(i-1)t+ \left\lfloor \frac{i-1}{c}\right\rfloor +1,
\]
for any $i=1,\dots,d$.
\end{Theorem}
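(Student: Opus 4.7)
The plan is to prove both extremality statements by the same two-step recipe. First, I would verify that the candidates indeed lie in $G(I_{c,(n,d,t)})$: writing $d = kc+r$ with $0<r\leq c$, the sorted index tuple of $M$ partitions into $k$ top blocks of length $c$ separated by gaps of $t+1$ and one bottom block of length $r$, as one reads off from the closed form $a_{d-i+1}=n-(i-1)t-\lfloor(i-1)/c\rfloor$; the statement for $m$ is symmetric. Next, I would establish the universal bound: for every $u=x_{i_1}\cdots x_{i_d}$ with $i_1\leq\cdots\leq i_d$ one has $\alpha_k\leq i_k\leq a_k$ for every $k$. For the upper part, the indices $i_k,\ldots,i_d$ are $t$-spread with blocks of size at most $c$, hence occupy at least $\lceil(d-k+1)/c\rceil$ blocks; summing within-block gaps of $t$ and inter-block gaps of $t+1$, and invoking the identity $\lfloor p/c\rfloor+1=\lceil(p+1)/c\rceil$, gives $i_k\leq n-(d-k)t-\lfloor(d-k)/c\rfloor=a_k$. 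The lower bound is the mirror packing argument starting from $i_1\geq 1$.

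Maximality of $M$ and minimality of $m$ follow at once, because any cover $u\prec u'$ arises from a single swap $u'=x_ju/x_i$ with $i<j$, which (after re-sorting) weakly increases every entry of the sorted multi-support tuple and strictly increases at least one entry, since the entry-sum grows by $j-i>0$. Hence $u'\succ M$ would strictly dominate the sorted multi-support of $M$ coordinatewise, contradicting $i_k\leq a_k$; symmetrically $u'\prec m$ is impossible. For uniqueness of the maximum, given any $u\neq M$, let $k$ be the \emph{largest} index with $i_k<a_k$, so $i_l=a_l$ for $l>k$, and set $w:=x_{a_k}u/x_{i_k}$. I claim $w\in G(I_{c,(n,d,t)})$, which exhibits $u\prec w$ and shows $u$ is not maximal. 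In the sorted multi-support $\{i_1,\ldots,i_{k-1},a_k,a_{k+1},\ldots,a_d\}$ of $w$, the crucial bound $a_k-i_{k-1}\geq(a_k-i_k)+(i_k-i_{k-1})\geq 1+t$ both preserves $t$-spreadness at position $k$ and separates $a_k$ as a fresh block from the block of $i_{k-1}$; therefore the block decomposition of $w$ above position $k$ is a suffix of that of $M$ (so no block exceeds length $c$), while below position $k-1$ it is unchanged from $u$. The dual construction for the minimum takes $k$ to be the \emph{smallest} index with $i_k>\alpha_k$ and forms $w:=x_{\alpha_k}u/x_{i_k}$, using the mirror gap $i_{k+1}-\alpha_k\geq t+1$.

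The main obstacle I anticipate is the $c$-bound check inside the swap, rather than $t$-spreadness which follows automatically from the gap inequality. What has to be ruled out is that the inserted variable extends an adjacent block into one of length $c+1$, and this is exactly what the extremal choice of $k$ provides, via the inequality $a_k-i_{k-1}\geq t+1$ (resp.\ $i_{k+1}-\alpha_k\geq t+1$). The case $t=0$, where ``block'' is synonymous with ``repetition'' and the $c$-bound is the usual exponent cap, requires no separate treatment: the inequality $\geq t+1=1$ still forces the inserted variable to be strictly larger (resp.\ smaller) than its neighbour on the fixed side, so the multiplicity cap is never breached.
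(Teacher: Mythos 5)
Your proof is correct and follows essentially the same route as the paper: verify the extremal candidates lie in $G(I_{c,(n,d,t)})$ and, for uniqueness, swap the entry at the largest (resp.\ smallest) index where a given generator deviates from the candidate to produce a strictly larger (resp.\ smaller) element. You additionally make explicit two points the paper leaves implicit --- the universal coordinatewise bound $\alpha_k\le i_k\le a_k$ (needed even to know that some index satisfies $i_l<a_l$ when $u\ne M$) and the check that the swapped monomial remains $t$-spread and $c$-bounded via the gap inequality $a_k-i_{k-1}\ge t+1$ --- which fills in rather than changes the paper's argument.
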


\begin{proof}
Denote by $u_0=x_{a_1}\cdots x_{a_d}$. It is clear that $u_0 \in G(I_{c,(n,d,t)})$. Suppose that $u_0$ is not a maximal element. Then there exists an element $x_{b_1}\dots x_{b_d}$ that covers $u_0$. Thus, there exist $i<j$ such that $x_j \mid x_{b_1}\cdots x_{b_d}$ and $u_0=x_i(x_{b_1}\cdots x_{b_d})/x_j$, which implies that $x_j u_0/ x_i= x_{b_1}\cdots x_{b_d}$, which is not possible and this can be seen from the formula for $a_{d-i+1}$.

In order to show the uniqueness of $u_0$, we assume that there exists another element $u=x_{c_1}\cdots x_{c_d}$ which is maximal and $u\neq u_0$. Since $u\neq u_0$, there  exists some $l$ such that $c_l<a_l$ and we take the largest $l$ with this property. Then $u^{\prime}=x_{a_l}u/x_{c_l} \in I_{c,(n,d,t)}$ and $u\prec u^{\prime}$, hence a contradiction.

Similarly one can argue that $x_{{\alpha}_1}\cdots x_{{\alpha}_d}$ is the unique minimal element with respect to $\prec$.
\end{proof}

By $[a,b]$ we denote all the integer numbers $c$ with $a\leq c \leq b$. With the notation from Theorem \ref{maxelem}, we have the following result:
\begin{Proposition}
\label{complete}
For $i=1,\dots, d$, let $K_i$ be the complete graph on $[\alpha_i, a_i]$. Let $\Gamma$ be the linear relation graph of $I_{c,(n,d,t)}$. Without loss of generality we may assume that $\gcd(G(I_{c,(n,d,t)}))=1$.  Then $\alpha_i<a_i$ for all $i$,  and  $\Gamma$ and $\Union_{i=1}^d K_i$ have the same number  of connected components and the same number of vertices.
\end{Proposition}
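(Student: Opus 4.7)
My plan is to establish the proposition in three stages. First I would prove $\alpha_i<a_i$ for every $i$. The key observation is that for any generator $u=x_{b_1}\cdots x_{b_d}$ with $b_1\le\cdots\le b_d$, one has $\alpha_i\le b_i\le a_i$ for every $i$. This follows by connecting $u$ to both the unique minimum and the unique maximum of Theorem~\ref{maxelem} via chains of covers in $\prec$, noting that each single cover weakly decreases every sorted position (a direct check: if the swap $x_j\mapsto x_i$ with $i<j$ moves $j$ out of position $q$ and drops $i$ into position $p\le q$, then the entries at positions less than $p$ or greater than $q$ are unchanged, while $v_k=u_{k-1}\le u_k$ for $p<k\le q$). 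Hence if $\alpha_i=a_i$, the variable $x_{\alpha_i}$ appears at position $i$ in every generator, so it divides $\gcd(G(I_{c,(n,d,t)}))$, contradicting the hypothesis.

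Second, I would show $V(\Gamma)=\bigcup_i[\alpha_i,a_i]$. The inclusion $\subseteq$ is immediate: every vertex of $\Gamma$ appears in the support of some generator and thus at some position $p$, landing it in $[\alpha_p,a_p]$. For $\supseteq$, given $v\in[\alpha_i,a_i]$ I would construct a generator $A$ having $v$ at position $i$ by filling positions $p<i$ with $\alpha_p$ and positions $p>i$ with $a_p$, verifying the $t$-spread and $c$-bound from the defining formulas for $\alpha$ and $a$, and then exhibit a cover of $A$ in $\prec$ that acts locally at position $i$ (possible because $\alpha_i<a_i$). This cover produces an edge of $\Gamma$ incident to $v$.

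Third, I would match the component counts. One direction is structural: if $\{u,v\}$ is an edge of $\Gamma$ coming from $x_uA=x_vB$, with $u$ landing at position $p$ in $B$ and $v$ sitting at position $q\ge p$ in $A$, then the entries $b_p,\dots,b_{q-1}$ of $A$ lie in both $[\alpha_j,a_j]$ and $[\alpha_{j+1},a_{j+1}]$ for $j=p,\dots,q-1$, so the intervals $K_p,\dots,K_q$ lie in a single component of $\bigcup_i K_i$, forcing $u$ and $v$ into the same component. The other direction requires that each $[\alpha_i,a_i]$ is connected in $\Gamma$, which I would prove by producing, for every consecutive pair $v, v+1\in[\alpha_i,a_i]$, a direct swap edge between them in $\Gamma$. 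The main technical obstacle lies precisely here: the naive construction of such an edge can enlarge a maximal block to size $c+1$ when $v$ is at a block boundary. Handling these tight cases will likely require simultaneously adjusting a neighboring position and routing the connection through an intermediate generator, producing a path of length two in $\Gamma$ in place of a direct edge.
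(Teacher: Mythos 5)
Your outline follows the paper's proof almost step for step: the paper likewise deduces $\alpha_i<a_i$ from the gcd hypothesis together with the position bounds $\alpha_q\le i_q\le a_q$ supplied by Theorem~\ref{maxelem}, realizes each $k\in[\alpha_i,a_i]$ in position $i$ of the monomial $u_k=x_{\alpha_1}\cdots x_{\alpha_{i-1}}x_kx_{a_{i+1}}\cdots x_{a_d}$, and uses exactly your position‑shift/interval‑overlap argument to show that every edge of $\Gamma$ stays inside one component of $\bigcup_{i=1}^dK_i$. The one place you diverge is the ``main technical obstacle'' at the end, and it does not actually arise: the relations $x_lu_k=x_ku_l$ give \emph{every} edge of $K_i$ directly (the paper proves $\bigcup_iK_i\subseteq\Gamma$ as subgraphs, not merely connectivity of each interval), because $u_k$ is automatically $c$-bounded. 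Indeed, for $t\ge1$ the entry $k$ can join the last prefix block only if $k-\alpha_{i-1}=t$, which forces $k=\alpha_i$ and $c\nmid(i-1)$, and then the merged block is the truncation to positions $1,\dots,i$ of a block of the minimal element, of size $\bigl((i-1)\bmod c\bigr)+1\le c$; symmetrically $k$ can join the first suffix block only if $k=a_i$ and $c\nmid(d-i)$, giving size $\bigl((d-i)\bmod c\bigr)+1\le c$; and the two merges cannot occur simultaneously, since that would force $\alpha_i=a_i$, which you have already excluded. The case $t=0$ is the same computation with blocks read as exponents. So the naive swap never produces a block of size $c+1$, and no intermediate generator or path of length two is needed; you should simply carry out this verification rather than reroute the argument.
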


\begin{proof} Suppose $\alpha_i=a_i$ for some $i$. Then $x_{a_i}$ is common factor of the elements in $G(I_{c,(n,d,t)})$, a contradiction.

We show that $\Union_{i=1}^d K_i\subseteq \Gamma$.  Indeed,
let $\{k,l\}\in \Union_{i=1}^d K_i$. Then $\{k,l\}\in K_i$ for some $i$. We may assume $k< l$. Thus, ${\alpha}_i \leq k< l\leq a_i$. Let $w_1=x_{{\alpha}_1}\cdots x_{{\alpha}_{i-1}}x_k x_{a_{i+1}}\cdots x_{a_d}$ and $w_2=x_{{\alpha}_1}\cdots x_{{\alpha}_{i-1}}x_l x_{a_{i+1}}\cdots x_{a_d}$. Then, it is clear that $w_1$ and $w_2 \in G(I_{c,(n,d,t)})$ and $x_lw_1=x_kw_2$ is a relation, thus $\{k,l\}\in E(\Gamma)$.

Next we observe that $V(\Gamma)=V(\Union_{i=1}^d K_i)$. We only need to show that $V(\Gamma)\subseteq V(\Union_{i=1}^d K_i)$. In fact, if $i\in V(\Gamma)$, then there exists $u\in G(I_{c,(n,d,t)})$ with $i \in \supp(u)$. Note that if  $u=x_{i_1}\cdots x_{i_d}$, then $i_q\in K_q$ for all $q$, by Theorem~\ref{maxelem}. This show that $i \in K_q$ for some $q$.

Now since $\Union_{i=1}^d K_i\subset \Gamma$ and since $V(\Gamma)=V(\Union_{i=1}^d K_i)$ it follows that the number of connected components of $\Gamma$ is less than or equal to the number of connected components $\Union_{i=1}^d K_i$.

Now let $\{i,j\}\in E(\Gamma)$. We show that the vertices $i$ and $j$ belong to the same connected component of $\Union_{i=1}^d K_i$. This then shows that $\Gamma$ and $\Union_{i=1}^d K_i$ have the same number of connected components. Indeed, we may assume that $i<j$. Because $\{i,j\}\in E(\Gamma)$, there exists a monomial $u\in G(I_{c, (n,d,t)})$ such $x_j| u$ and $v=x_i(u/x_j)\in G(I_{c, (n,d,t)})$. Let $u=x_{i_1}\cdots x_{i_k}\cdots x_{i_d}$. Then  $j=i_k$ for some $k$. By Theorem~\ref{maxelem},  $i_q \in K_q=[\alpha_q, a_q]$ for all $q=1,\dots, d$. Let $l$ be the smallest integer such that $i\leq i_l$. If $k=l$, then $i,j\in K_l$, and hence to the same connected component. Now let $k\neq l$.
Then
\[
v=x_{i_1}\cdots x_{i_{l-1}}x_i x_{i_l}\cdots x_{i_{k-1}}x_{i_{k+1}}\cdots x_{i_d}.
\]

Since $v\in  G(I_{c,(n,d,t)})$, it follows that $i_q\in K_{q+1}$ for $q=l,\ldots k-1$. On the other hand $i_q\in K_q$ for all $q$. It follows that $K_q\sect K_{q+1}\neq \emptyset$ for $q=l,\ldots, k-1$. Therefore,
\[
K_l\union K_{l+1}\union \cdots \union  K_k
\]
is connected. Since $i\in K_l$ and $j\in K_k$, it follows that $i$ and $j$ belong to the same connected component.
\end{proof}

\begin{Corollary}
\label{connected}
We have  $\ell(I_{c,(n,d,t)})=n$ if and only if $\Gamma$ is connected.
\end{Corollary}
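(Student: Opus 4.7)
The plan is to extract the corollary by combining Lemma~\ref{better}, Proposition~\ref{complete}, and the explicit descriptions of the extremal elements in Theorem~\ref{maxelem}. By Theorem~\ref{asbefore} the ideal $I=I_{c,(n,d,t)}$ has linear quotients and hence linear relations, so Lemma~\ref{better} is applicable and yields $\ell(I)=r-s+1$, where $r$ is the number of vertices of $\Gamma$ and $s$ is the number of its connected components. By Proposition~\ref{complete} (where we reduce to the case $\gcd(G(I))=1$ without loss of generality), $r$ and $s$ equal the corresponding numerical invariants of the auxiliary graph $\bigcup_{i=1}^d K_i$.

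The next step is to read off from the formulas in Theorem~\ref{maxelem} that $\alpha_1=1$ and $a_d=n$, and to observe that the sequences $(\alpha_i)$ and $(a_i)$ are both strictly increasing in $i$. Consequently $V(\Gamma)\subseteq [1,n]$, so $r\le n$. Moreover, since any two complete graphs $K_i$ and $K_j$ on intervals share a vertex exactly when the intervals meet, the monotonicity above implies that $\bigcup_{i=1}^d K_i$ is connected if and only if $\alpha_{i+1}\le a_i$ for all $i=1,\dots,d-1$ (a nonconsecutive overlap would contradict the monotonicity, so there is nothing more to check).

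With these ingredients the conclusion is immediate. If $\Gamma$ is connected, then $s=1$ and the consecutive overlap condition forces $\bigcup_{i=1}^d [\alpha_i,a_i]=[\alpha_1,a_d]=[1,n]$; hence $r=n$ and $\ell(I)=n-1+1=n$. Conversely, if $\Gamma$ is disconnected, then $s\ge 2$ and $\ell(I)=r-s+1\le n-2+1=n-1<n$. The genuinely non-trivial work was already done in establishing Lemma~\ref{better} and Proposition~\ref{complete}; the only remaining delicate point, and the one I would verify first, is the extraction of the boundary values $\alpha_1=1$ and $a_d=n$ from the closed forms in Theorem~\ref{maxelem}, since the entire equivalence hinges on these endpoints realizing the full range $[1,n]$.
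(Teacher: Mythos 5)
Your proposal is correct and follows essentially the same route as the paper: apply Lemma~\ref{better} together with Proposition~\ref{complete}, use $|V(\Gamma)|\le n$ and $s\ge 1$ (you phrase the forward direction contrapositively, the paper directly, but the content is identical), and for the converse read off $\alpha_1=1$ and $a_d=n$ from Theorem~\ref{maxelem} so that connectivity forces $V(\Gamma)=[1,n]$. The only nitpick is that the sequences $(\alpha_i)$ and $(a_i)$ are merely non-decreasing when $t=0$ (e.g.\ $\alpha_i=\lfloor (i-1)/c\rfloor+1$ repeats values), not strictly increasing, but weak monotonicity is all your interval-overlap argument actually uses.
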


\begin{proof}
By Lemma~\ref{better} and Proposition~\ref{complete} we have  $\ell(I_{c,(n,d,t)})=|V(\Gamma)|-s+1$, where $s$ is the number of connected components of $\Gamma$.

Hence if  $\ell(I_{c,(n,d,t)})=n$,  we must have $|V(\Gamma)|=n$ and $s=1$, because $|V(\Gamma)|\leq n$ and $s\geq 1$,  in general. Hence $\Gamma$ is connected.

Conversely, assume that $\Gamma$ is connected. Then $V(\Gamma)=\Union_{i=1}^dK_i=[\alpha_1,a_d]$.
Since $\alpha_1=1$ and $a_d=n$, we see that $|V(\Gamma)|=n$, and hence $\ell(I_{c,(n,d,t)})=n$.
\end{proof}

For integers $d>0$ and $c>0$,  let $r=d-\lfloor (d-1)/c\rfloor c$. Then for $i=1,\dots, d-1$ we set
\[
\delta_i= \left\lfloor \frac{i}{c}\right\rfloor - \left\lceil\frac{i-r}{c}\right\rceil.
\]

For the proof of the next results  we need
\begin{Lemma} \label{delta_i}
%$-1\leq \delta_i \leq 1$.
Let $d=kc+r$ with $0<r\leq c$ and $i-r=lc+s_i$ with $0\leq s_i<c$ for some integers $k$ and $l$. Then we have:
\begin{enumerate}[{\em(i)}]
\item if $s_i=0$, then
\[
\delta_i= \left\{
\begin{array}{ll}
      0, & \text{ if } r<c,\\
      1, & \text{ if } r=c, \\
\end{array}
\right.
\]
\item if $0<s_i<c$ and  $r=c$,  then $\delta_i=0$, while  if $r<c$,  then
\[
\delta_i= \left\{
\begin{array}{rl}
      -1, & \text{ if } r+s_i<c,\\
      0, & \text{ if } r+s_i\geq c. \\
\end{array}
\right.
\]

\end{enumerate}
\end{Lemma}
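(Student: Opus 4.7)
The plan is a direct computation by case analysis. The key observation is that since $i - r = lc + s_i$ with $0 \leq s_i < c$, we immediately get
\[
\left\lceil \frac{i-r}{c} \right\rceil = \begin{cases} l, & s_i = 0, \\ l+1, & 0 < s_i < c. \end{cases}
\]
For the other term I would write $i = r + lc + s_i$, so that
\[
\left\lfloor \frac{i}{c} \right\rfloor = l + \left\lfloor \frac{r + s_i}{c} \right\rfloor.
\]
Since $0 < r \leq c$ and $0 \leq s_i < c$, we have $0 < r + s_i < 2c$, so $\lfloor (r+s_i)/c \rfloor \in \{0,1\}$, with value $0$ exactly when $r + s_i < c$.

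Given these two expressions, I would simply run through the four combinations of $(s_i = 0 \text{ vs } s_i > 0)$ and $(r = c \text{ vs } r < c)$. When $s_i = 0$ the term $r + s_i = r$, which is $< c$ if $r < c$ (giving $\delta_i = l - l = 0$) and equals $c$ if $r = c$ (giving $\delta_i = (l+1) - l = 1$); this yields part (i). When $0 < s_i < c$ and $r = c$, we automatically have $r + s_i > c$, so $\lfloor i/c \rfloor = l+1 = \lceil (i-r)/c \rceil$ and $\delta_i = 0$. When $0 < s_i < c$ and $r < c$, the sign of $\delta_i$ is controlled by the comparison of $r + s_i$ with $c$: if $r + s_i < c$ then $\lfloor i/c \rfloor = l < l + 1 = \lceil (i-r)/c \rceil$, giving $\delta_i = -1$, while if $r + s_i \geq c$ both terms equal $l+1$, giving $\delta_i = 0$. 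This exhausts case (ii).

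There is no real obstacle here; the whole statement is a bookkeeping lemma, and the only care needed is to handle the borderline $r + s_i = c$ correctly (it falls in with $r + s_i > c$, because then $\lfloor (r+s_i)/c\rfloor = 1$), which is why the statement is phrased with $r + s_i \geq c$ rather than $>$.
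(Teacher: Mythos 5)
Your computation is correct and is essentially the same as the paper's: the paper first converts $\lceil (i-r)/c\rceil$ into $\lfloor (i-r)/c\rfloor$ or $\lfloor (i-r)/c\rfloor+1$ according to whether $c\mid(i-r)$, then runs the same case analysis on $r+s_i$ versus $c$. No substantive difference.
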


\begin{proof}
(i) If $s_i=0$, then $c\mid (i-r)$. Then
\[
\delta_i =\left\lfloor\frac{i}{c}\right\rfloor-\left\lfloor\frac{i-r}{c}\right\rfloor.
\]
 Hence, if $r=c$, then $i=(l+1)c$, which implies that $\delta_i=l+1-l=1$. If $r<c$, then $i=lc+r$ with $0<r<c$, which implies that $\delta_i=l-l=0$.

(ii) If $0<s_i<c$, then $c\nmid (i-r)$. Then
\[
\delta_i= \left\lfloor \frac{i}{c}\right\rfloor - \left\lfloor\frac{i-r}{c}\right\rfloor-1.
\]
Now, if $r=c$, then $i=(l+1)c+s_i$ with $0< s_i<c$, thus $\delta_i=l+1-l-1=0$. If $r<c$, then $i=lc+r+s_i$ with $0<r<c$ and $0< s_i<c$. Thus $0< r+s_i< 2c$ and we can distinguish the cases: if $0<r+s_i<c$, then $\delta_i=-1$, and if $c\leq r+s_i <2c$, then $\delta_i=0$.
\end{proof}

The proof of the previous lemma shows

\begin{Lemma}\label{delta}
Let $\delta_{\min}= \min\{ \delta_i\:\;  i=1, \dots, d-1\}$ and $\delta_{\max}= \max\{ \delta_i\:\;  i=1, \dots, d-1\}$.   Then

\[
\delta_{\min}= \left\{
\begin{array}{rl}
      -1, & \text{ if } r<c,\\
        0, & \text{ if } r=c, \\
\end{array}
\right.
\]

\[
\delta_{\max}= \left\{
\begin{array}{ll}
      0, & \text{ if } r<c,\\
      1, & \text{ if } r=c, \\
\end{array}
\right.
\]

\end{Lemma}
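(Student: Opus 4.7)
The plan is to read the conclusion off directly from Lemma~\ref{delta_i}, adding to it only the verification that the extremes $-1$, $0$, $1$ in the statement are actually attained by some index $i\in\{1,\dots,d-1\}$. The previous lemma already proves that every $\delta_i$ lies in $\{0,1\}$ when $r=c$ and in $\{-1,0\}$ when $r<c$, which immediately gives the one-sided bounds $\delta_{\max}\leq 1$, $\delta_{\min}\geq 0$ in the first case and $\delta_{\max}\leq 0$, $\delta_{\min}\geq -1$ in the second. So the only remaining content is attainment.

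For the case $r=c$, I would realize $\delta_i=1$ by picking $i$ to be the smallest positive multiple of $c$ lying in $\{1,\dots,d-1\}$, e.g.\ $i=c$, so that $s_i=0$ and case (i) of Lemma~\ref{delta_i} applies; and I would realize $\delta_i=0$ by choosing some $i$ with $0<s_i<c$, for instance $i=1$, landing in case (ii). For the case $r<c$, I would attain $\delta_i=0$ via any index with $s_i=0$ (case (i)), e.g.\ $i=r$ or $i=c+r$ if $r\geq 1$ lies in range, and attain $\delta_i=-1$ via an index with $0<s_i<c-r$, e.g.\ $i=r+1$, for which $s_i=1<c-r$ under the usual assumption $c-r\geq 2$.

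The main obstacle is the bookkeeping needed to ensure that the representative indices I pick genuinely lie in $\{1,\ldots,d-1\}$, which requires using the natural nontriviality hypotheses on $(n,d,c)$ under which the present section studies $I_{c,(n,d,t)}$ (otherwise the set of generators has gcd $\neq 1$, cf.\ Proposition~\ref{complete}, and the whole analytic-spread discussion collapses). In any edge cases where the explicit choice falls outside the range, one can shift the index by $c$ to produce an admissible one with the same value of $s_i$, so the mathematical substance beyond Lemma~\ref{delta_i} is zero; the lemma is essentially a direct corollary of the case analysis carried out in the preceding one.
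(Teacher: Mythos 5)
Your route is the same as the paper's: the paper gives no argument at all beyond the remark that ``the proof of the previous lemma shows'' the statement, so reading the one-sided bounds off Lemma~\ref{delta_i} and then verifying that the extreme values are actually attained is exactly the right decomposition, and you are correct that attainment is the only real content.

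The gap is in the attainment step, and it cannot be repaired by ``shifting the index by $c$''. For $r<c$, Lemma~\ref{delta_i} yields $\delta_i=-1$ only for indices with $0<s_i<c-r$; such an $s_i$ exists only if $c-r\geq 2$, which is not a standing hypothesis anywhere in the paper (you call it ``the usual assumption'', but nothing in the section assumes it). When $r=c-1$ --- e.g.\ $c=2$ and $d$ odd, or $c=3$, $d=5$ --- one checks directly that $\delta_i=\lfloor i/c\rfloor-\lceil (i-r)/c\rceil=0$ for every $i\in\{1,\dots,d-1\}$, so $\delta_{\min}=0$ and the claimed value $-1$ is never attained: the residue window $0<s_i<c-r$ is empty, and no shift by $c$ can land an index in an empty window. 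The other extremes fail similarly in degenerate ranges: for $r=c$ the value $\delta_i=1$ needs an in-range $i$ with $c\mid i$, which does not exist when $d=c$; and for $c=1$ one has $\delta_i=1$ for all $i$, so $\delta_{\min}=1$ rather than $0$. So the statement itself is false for $c=1$, for $d\leq c$, and for $r=c-1$ with $r<c$, and your proof cannot be completed as written. To make it correct you must either impose and use the hypotheses under which your representative indices genuinely lie in $\{1,\dots,d-1\}$ and the window $0<s_i<c-r$ is nonempty (essentially $c\geq 2$, $d>c$, and $r\leq c-2$ in the case $r<c$), or record these counterexamples; treating the issue as routine bookkeeping is precisely the missing step.
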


\begin{Proposition}\label{dcomp}
Let $k=\left\lfloor (d-1)/c\right\rfloor$. Then the linear relation graph of $I_{c,(n,d,t)}$ has at most $d$ connected components,  and it has exactly $d$ connected components if and only if $n-(k+t(d-1))<t+1+\delta_{\min}$. In this case, the connected components of $\Gamma$ are $K_1,\dots, K_d$.
\end{Proposition}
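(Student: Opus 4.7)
The plan is to invoke Proposition~\ref{complete} to reduce the problem to counting the connected components of $\bigcup_{i=1}^d K_i$, where $K_i$ is the complete graph on the interval $[\alpha_i,a_i]$. Since each $K_i$ is itself connected, the components of this union correspond exactly to maximal runs of pairwise overlapping intervals $[\alpha_i,a_i]$; in particular, the number of components is at most $d$, with equality precisely when all $d$ intervals are pairwise disjoint.

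Both sequences $(\alpha_i)_{i=1}^d$ and $(a_i)_{i=1}^d$ are strictly increasing in $i$ (visible from the explicit formulas of Theorem~\ref{maxelem}), so pairwise disjointness of the intervals reduces to the $d-1$ consecutive inequalities $a_i<\alpha_{i+1}$ for $i=1,\ldots,d-1$. Substituting the formulas of Theorem~\ref{maxelem} and using the very definition $\delta_i=\lfloor i/c\rfloor-\lceil(i-r)/c\rceil$, I expect a direct simplification to give
\[
\alpha_{i+1}-a_i \;=\; t+1+\delta_i-\bigl(n-k-t(d-1)\bigr).
\]
Thus $a_i<\alpha_{i+1}$ is equivalent to $n-(k+t(d-1))<t+1+\delta_i$.

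Requiring this inequality for every $i\in\{1,\ldots,d-1\}$ is equivalent to requiring it for the index realising $\delta_{\min}$, which yields the stated criterion $n-(k+t(d-1))<t+1+\delta_{\min}$. When this criterion holds, each $K_i$ is by itself a connected component of $\bigcup_j K_j$, and hence of $\Gamma$; if it fails, then some $K_i\cap K_{i+1}\neq\emptyset$ and these two intervals merge, dropping the number of components below $d$. The only actual calculation is the simplification of $\alpha_{i+1}-a_i$ above; this is pure bookkeeping with floors and ceilings, and since $\delta_i$ was defined precisely to absorb the floor/ceiling combination arising from $\lfloor i/c\rfloor$ and $\lceil(i-r)/c\rceil$, it presents no real obstacle.
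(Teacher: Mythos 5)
Your proof is correct and follows essentially the same route as the paper: reduce to $\bigcup_{i=1}^d K_i$ via Proposition~\ref{complete}, note that pairwise disjointness of the intervals $[\alpha_i,a_i]$ reduces to the consecutive conditions $a_i<\alpha_{i+1}$, and compute $\alpha_{i+1}-a_i=t+1+\delta_i-a_1$ from Theorem~\ref{maxelem} to arrive at the criterion $a_1<t+1+\delta_{\min}$. The only minor imprecision is that for $t=0$ the sequences $(\alpha_i)$ and $(a_i)$ are merely non-decreasing rather than strictly increasing, but monotonicity is all your reduction actually uses, so nothing is lost.
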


\begin{proof}
By Proposition~\ref{complete}, it follows that $\Gamma$ has at most $d$ connected components and that it has exactly $d$ connected components if and only if $V(K_i)\cap V(K_j)= \emptyset$ for all $i\neq j$. We may assume that $i<j$. Then $V(K_i)\cap V(K_j)= \emptyset$ for $i<j$ if and only if $V(K_i)\cap V(K_{i+1})=\emptyset$ for any $i$, because ${\alpha}_j>a_{i+1}$. Now $V(K_i)\cap V(K_{i+1})=\emptyset$ if and only if $a_i<{\alpha}_{i+1}$. Therefore, together with  Proposition~\ref{complete} it follows that $\Gamma$ has exactly $d$ components if and only if $a_i<{\alpha}_{i+1}$ for all $i$,  and in this  case  $\Gamma =\Union_{i=1}^d K_i$ and  $K_1,\ldots, K_d$ are the connected components of $\Gamma$.

Now we analyze what it means that $a_i<{\alpha}_{i+1}$ for all $i$. Let $r=d-kc$. By Theorem~\ref{maxelem}, the condition $a_i<\alpha_{i+1}$ holds if and only if $$a_1<t+1+ \left\lfloor \frac{i}{c}\right\rfloor -\left\lceil \frac{i-r}{c}\right\rceil$$ for all $i$, and this is the case if and if  $a_1<t+1+\delta_{\min}.$
\end{proof}

\begin{Theorem}\label{hardwork}
Let $\Gamma$ be the relation graph of $I_{c,(n,d,t)}$, $k= \left\lfloor (d-1)/c\right\rfloor$ and $r=d-kc$. Then the following holds:
\begin{enumerate}[{\em(i)}]
\item If $n-k-t(d-1)<t+1+\delta_{\min}$, then $\Gamma$ has exactly $d$ connected components and
\[
|V(\Gamma)|= nd-d(d-1)t-k(k-1)c-2rk.
\]
\item $\Gamma$ is a connected graph and has $n$ vertices if and only if  $n-k-t(d-1)\geq t+1+\delta_{\max}$.

\item If  $t+1+\delta_{\max}>a_1 \geq t+1+\delta_{\min}$,
then $\Gamma$ has $d/c$ connected components and $n$  vertices  if $r=c$, and  $\Gamma$ has $(r+1)k+1$ connected components and
\[
\nu=n-\sum_{j=0}^{k}\sum_{i=0}^{r+1}(\alpha_{jc+i}-a_{jc+i-1}+1)-(\alpha_{kc+r+1}-a_{kc+r}+1)
\]
vertices  if $r<c$.
\end{enumerate}
\end{Theorem}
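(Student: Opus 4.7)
The plan is to combine Proposition~\ref{complete}, which identifies both the vertex set and the connected components of $\Gamma$ with those of $\bigcup_{i=1}^d K_i$, with the explicit formulas for $\alpha_i$ and $a_i$ given in Theorem~\ref{maxelem}. Since both sequences $(\alpha_i)$ and $(a_i)$ are strictly increasing in $i$, two cliques $K_i$ and $K_j$ can meet only when $|i-j|\leq 1$, so the component structure of $\bigcup K_i$ is determined entirely by which consecutive pairs $(K_i,K_{i+1})$ overlap. A direct calculation with the formulas yields the single identity
\[
\alpha_{i+1}-a_i \;=\; (t+1+\delta_i)-a_1,
\]
so $K_i$ and $K_{i+1}$ share a vertex iff $a_1\geq t+1+\delta_i$. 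This identity drives all three parts.

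For part (i), Proposition~\ref{dcomp} already asserts that under $a_1<t+1+\delta_{\min}$ the components of $\Gamma$ are precisely $K_1,\ldots,K_d$, so it only remains to evaluate $|V(\Gamma)|=\sum_{i=1}^d(a_i-\alpha_i+1)$. Using the alternate form $a_{d-i+1}=n-(i-1)t-\lfloor(i-1)/c\rfloor$ together with $\alpha_i=(i-1)t+\lfloor(i-1)/c\rfloor+1$, the $t$-contributions telescope and the task reduces to computing $S=\sum_{i=0}^{d-1}\lfloor i/c\rfloor$. Grouping indices by their quotient modulo $c$ (and using $d-1=kc+r-1$) gives $S=c\binom{k}{2}+kr$, and substitution produces $nd-d(d-1)t-k(k-1)c-2rk$. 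Part (ii) follows directly from the boxed identity: the condition $a_i\geq\alpha_{i+1}$ for every $i$ is exactly $a_1\geq t+1+\delta_{\max}$, and in that situation every pair of consecutive cliques overlaps, so $\bigcup K_i=[\alpha_1,a_d]=[1,n]$ is one connected piece covering all $n$ vertices; conversely, if $a_1<t+1+\delta_{\max}$ then some $a_{i_0}<\alpha_{i_0+1}$ and the monotonicity of $(\alpha_i),(a_i)$ separates the cliques indexed $\leq i_0$ from those indexed $>i_0$, disconnecting $\Gamma$.

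Part (iii) requires a finer analysis of the $\delta_i$ via Lemma~\ref{delta_i}. In the sub-case $r=c$ one has $a_1=t+1$ and $\delta_i\in\{0,1\}$, so $K_i\cap K_{i+1}\neq\emptyset$ iff $\delta_i=0$ iff $c\nmid i$; the cliques therefore merge into the $k+1=d/c$ consecutive windows $\{K_{jc+1},\ldots,K_{(j+1)c}\}$, and a direct check that $\alpha_{jc+1}=a_{jc}+1$ shows that adjacent windows abut without gap, yielding $V(\Gamma)=[1,n]$. In the sub-case $r<c$ one has $a_1=t$, and Lemma~\ref{delta_i} locates the overlaps exactly at indices $i$ with $s_i\in\{1,\ldots,c-r-1\}$, equivalently $i\bmod c\in\{r+1,\ldots,c-1\}$. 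Within each window $\{jc+1,\ldots,(j+1)c\}$ for $j=0,\ldots,k-1$ this produces $r$ isolated cliques followed by a merged run of $c-r$, while the terminal window $\{kc+1,\ldots,d\}$ contributes further isolated cliques; summing delivers the component count, and the vertex total $\nu$ then comes from the elementary identity $|V(\Gamma)|=n-\sum(\alpha_{i+1}-a_i-1)$ where the sum runs over those $i$ with $\alpha_{i+1}>a_i+1$.

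The main obstacle is the third case: the three possible behaviours at an index $i$, namely overlap, mere abutment, and a genuine gap, must be separated, and this bookkeeping becomes tractable only after decomposing $\{1,\ldots,d\}$ into blocks of length $c$ relative to $r$ and reading off each behaviour from Lemma~\ref{delta_i} block by block. The compact closed-form expression for $\nu$ in the statement hides this case analysis, and its verification is essentially an inclusion--exclusion over the gaps produced by the block decomposition.
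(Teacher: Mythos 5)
Your overall route is the same as the paper's: reduce everything to $\bigcup_{i=1}^d K_i$ via Proposition~\ref{complete}, control consecutive overlaps through the identity $\alpha_{i+1}-a_i=(t+1+\delta_i)-a_1$ (which is exactly the computation the paper performs in part (iii)), evaluate $\sum\lfloor i/c\rfloor$ for the vertex count in (i), and run the case analysis in (iii) through Lemma~\ref{delta_i}. Parts (i) and (ii) are fine and match the paper step for step; your extra observation in the $r=c$ subcase that $\alpha_{jc+1}=a_{jc}+1$, so that consecutive windows abut and $V(\Gamma)=[1,n]$, is a detail the paper asserts without comment and is worth keeping.

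The gap is in the last step of part (iii) for $r<c$, where you write that ``summing delivers the component count'' without performing the sum. Your own structural description --- $r$ isolated cliques plus one merged run of $c-r$ cliques in each of the $k$ full windows, plus $r$ isolated cliques in the terminal window $\{kc+1,\dots,d\}$ --- sums to $(r+1)k+r$, not to the claimed $(r+1)k+1$; equivalently, the number of overlapping consecutive pairs is $k(c-r-1)$, so the component count is $d-k(c-r-1)=(r+1)k+r$. These agree only when $r=1$. A concrete instance: for $I_{4,(13,6,2)}$ one has $k=1$, $r=2$, $a_1=2=t+1+\delta_{\min}$, the cliques are $[1,2],[3,4],[5,7],[7,9],[10,11],[12,13]$, and only $K_3$ and $K_4$ meet, giving $5=(r+1)k+r$ components rather than $4=(r+1)k+1$. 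The paper's own proof has exactly the same unexecuted summation (it appears to drop the $r-1$ non-overlap indices coming from the terminal partial window), so you have reproduced its argument faithfully --- but as a proof of the formula as stated, the step fails for $r\geq 2$, and you should either carry out the count explicitly and flag the discrepancy or restrict the claim. A similar explicit verification is owed for the closed form of $\nu$, which you correctly reduce to $n-\sum_i\max\bigl(0,\alpha_{i+1}-a_i-1\bigr)$ but do not reconcile with the displayed expression.
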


\begin{proof}
(i) By Theorem~\ref{maxelem}, $a_1=n-k-t(d-1)$, and since $a_1<t+1+\delta_{\min}$, Proposition~\ref{dcomp} implies that  $\Gamma$ has exactly $d$ connected components.
By Proposition~\ref{complete}, and by using Theorem~\ref{maxelem} we obtain
\begin{eqnarray*}
|V(\Gamma)|&=& \sum_{i=1}^d |[{\alpha}_i, a_i]|=\sum_{i=1}^d(a_i-{\alpha}_i+1)\\
&=& \sum_{i=1}^d a_{d-i+1} -\sum_{i=1}^d(\alpha_i-1)=\sum_{i=1}^{d}(a_{d-i+1}-\alpha_i +1)\\
&=& \sum_{i=1}^d (n-2t(i-1)-2\left\lfloor \frac{i-1}{c}\right\rfloor)\\
&=& nd-2t{d\choose 2}-2\sum_{i=1}^d \left\lfloor \frac{i-1}{c}\right\rfloor\\
&=& nd -2t{d\choose 2}-2({k\choose 2}c+rk)\\
&=& nd-d(d-1)t-k(k-1)c-2rk.
\end{eqnarray*}

(ii) By Proposition~\ref{complete}, it follows that $\Gamma$ is connected if and only if $V(K_i)\cap V(K_{i+1})\neq \emptyset$ for all $i=1,\dots, d-1$ which is the case if and only if  ${\alpha}_{i+1}\leq a_i$ for all $i$.
  We have that ${\alpha}_{i+1}\leq a_i$ if and only if $a_1\geq t+1+\delta_{\max}$, see Theorem~\ref{maxelem}.

(iii)  By Lemma~\ref{delta},  $\delta_{\max} - \delta_{\min}=1$. Therefore,  our assumption implies that  $a_1=t+1+\delta_{\min}$. It follows that
\[
a_i-\alpha_{i+1}= a_1-t-1-\delta_{i}
\leq 0.
\]
Thus, we see that $V(K_i)\sect V(K_{i+1})\neq \emptyset$ if and only if  $a_i=\alpha_{i+1}$. This is the case if only $\delta_{\min}=\delta_i$.

Assume $r=c$. By Lemma~\ref{delta}, $\delta_{\min}=0$. We have to consider those $i \in \{1, \dots, d-1\}$ such that $\delta_i=\delta_{\min}=0$. Hence, by using Lemma~\ref{delta_i}, we obtain
\[
\{i\:\; \delta_i = \delta_{\min}=0\}=\{i\:\; c\nmid (i-r)\}=\{i\:\; d-i \not\equiv 0 \mod c\}=\{i\:\; i \not\equiv d \mod c\}.
\]
Since $d \equiv 0\mod c$, it follows that $V(K_i)\sect V(K_{i+1})\neq \emptyset$  if and only if $i\not\equiv 0\mod c$. This shows that in this case we have $ d/c$ components and $|V(\Gamma)|=n$.

Finally assume that $r<c$. By Lemma~\ref{delta}, $\delta_{\min}=-1$. We have to consider those $i \in \{1, \dots, d-1\}$ such that $\delta_i=\delta_{\min}$. Hence, by using Lemma~\ref{delta_i}, we obtain
\[
\{i\:\; \delta_i=\delta_{\min}=-1\}.
\]
It follows form the definition of $\delta_i$ that $\delta_i=-1$ if and only if $i\in [jc+r+1, (j+1)c-1]$ for $j=0,\ldots, k-1$. From this we deduce that $\Gamma$ has $(r+1)k+1$ connected components and
\[
n-\sum_{j=0}^{k}\sum_{i=0}^{r+1}(\alpha_{jc+i}-a_{jc+i-1}+1)-(\alpha_{kc+r+1}-a_{kc+r}+1)
\]
 vertices,  where $a_{-1}= 0$, by definition.
\end{proof}

\begin{Theorem}\label{headache}
Let $k= \left\lfloor (d-1)/c\right\rfloor$ and $r=d-kc$. Then the following holds:

\begin{enumerate}[{\em(i)}]
\item If $n-k-t(d-1)<t+1+\delta_{\min}$, then
\[
\ell(I_{c,(n,d,t)})= nd-d(d-1)t-k(k-1)c-2rk-d+1.
% nd-d^2t-k(k-1)c-2rk+1.
\]

\item If $n-k-t(d-1)\geq t+1+\delta_{\max}$,  then $\ell(I_{c,(n,d,t)})=n$.

\item If $t+1+\delta_{\max}>a_1 \geq t+1+\delta_{\min}$, then $\ell(I_{c,(n,d,t)})=n-d/c+1$, if $r=c$, and $\ell(I_{c,(n,d,t)})=\nu-(r+1)k$, if $r<c$.
\end{enumerate}
\end{Theorem}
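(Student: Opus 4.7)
The plan is to combine Lemma~\ref{better} with Theorem~\ref{hardwork}, since Theorem~\ref{hardwork} already gives the vertex count and the number of connected components of the linear relation graph $\Gamma$ of $I_{c,(n,d,t)}$ in exactly the three cases listed here. Before invoking these results, I should verify that $I_{c,(n,d,t)}$ satisfies the hypotheses of Lemma~\ref{better}: it is equigenerated in degree $d$ by construction, and by Theorem~\ref{asbefore} it has linear quotients, hence in particular linear relations. After this check, Lemma~\ref{better} tells us that $\ell(I_{c,(n,d,t)}) = |V(\Gamma)| - s + 1$, where $s$ is the number of connected components of $\Gamma$.

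For case (i), the assumption $n-k-t(d-1) < t+1+\delta_{\min}$ is precisely the hypothesis of Theorem~\ref{hardwork}(i), so $\Gamma$ has exactly $d$ connected components and $|V(\Gamma)| = nd-d(d-1)t-k(k-1)c-2rk$. Substituting into the formula from Lemma~\ref{better} gives
\[
\ell(I_{c,(n,d,t)}) = nd - d(d-1)t - k(k-1)c - 2rk - d + 1,
\]
as required. For case (ii), the assumption $n - k - t(d-1) \geq t+1+\delta_{\max}$ is exactly the hypothesis of Theorem~\ref{hardwork}(ii), so $\Gamma$ is connected ($s=1$) and $|V(\Gamma)| = n$, yielding $\ell(I_{c,(n,d,t)}) = n - 1 + 1 = n$.

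For case (iii), the hypothesis $t+1+\delta_{\max} > a_1 \geq t+1+\delta_{\min}$ matches the hypothesis of Theorem~\ref{hardwork}(iii). If $r = c$, Theorem~\ref{hardwork}(iii) gives $n$ vertices and $d/c$ connected components, hence $\ell(I_{c,(n,d,t)}) = n - d/c + 1$. If $r < c$, it gives $\nu$ vertices and $(r+1)k+1$ connected components, yielding
\[
\ell(I_{c,(n,d,t)}) = \nu - ((r+1)k+1) + 1 = \nu - (r+1)k.
\]

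The main step of substance is really Theorem~\ref{hardwork} together with Lemma~\ref{better}; given those, Theorem~\ref{headache} is a direct bookkeeping consequence. The only mild subtlety is to make sure in case (iii) that the reduction to the subcase distinction on $r$ versus $c$ lines up with the vertex/component counts from Theorem~\ref{hardwork}(iii); this is a matter of arithmetic rather than of new ideas. No additional obstacle is expected.
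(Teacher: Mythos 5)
Your proposal is correct and follows exactly the paper's own route: the paper also deduces Theorem~\ref{headache} directly from Lemma~\ref{better} and Theorem~\ref{hardwork}, and your case-by-case substitution matches the intended bookkeeping. The added check that $I_{c,(n,d,t)}$ satisfies the hypotheses of Lemma~\ref{better} (equigenerated with linear relations, via Theorem~\ref{asbefore}) is a sensible touch the paper leaves implicit.
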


\begin{proof}
The desired result  follows from Lemma~\ref{better} and Theorem~\ref{hardwork}.
\end{proof}

We illustrate Theorem~\ref{headache} with suitable examples.

\begin{Examples}
(i) Let $I=I_{3, (12, 4,3)}$. Then $\Gamma$ has $4$ connected components: $K_1$ is the complete graph  on $[1,2]$, $K_2$ on $[4,6]$, $K_3$ on $[7,9]$ and $K_4$ on $[11,12]$. Since $\Gamma$ has $10$ vertices, we obtain $\ell(I)=10-4+1=7$.

(ii) Let $I=I_{3,(16,6,2)}$. Then $\Gamma$ is connected and has $16$ vertices. Thus, $\ell(I)=16-1+1=16$.

(iii) Let $I=I_{2,(5,3,1)}$. Then $\Gamma$ is connected and has $5$ vertices. Thus, $\ell(I)=5-1+1=5$.

(iv) Let $I=I_{2,(9,6,1)}$. Then $\Gamma$ has $3$ connected components: the connected component $K_1\union K_2$ with vertex set  $[1,3]$,   $K_3\union K_4$ with vertex set  $[4,6]$ and $K_5\union K_6$ on the vertex set  $[7,9]$. Since $\Gamma$ has $9$ vertices, we obtain $\ell(I)=9-3+1=7$.
\end{Examples}

Let $S$ be a standard graded $K$-algebra of dimension $n$ and $I\subset S$ a graded ideal. By a theorem of Brodmann \cite{brodmann}, $\depth S/I^s$ is constant for all $s$ large enough. This constant value is called the {\it limit depth} of $I$. Brodmann showed that
\[
\lim_{s\to\infty}\depth S/I^s\leq n-\ell(I).
\]
If the Rees algebra of $I$ is Cohen-Macaulay, then by a result of Huneke \cite{Huneke}, the associated graded ring of $I$ is Cohen-Macaulay and, by using a result of Eisenbud and Huneke \cite[Proposition 3.3]{EisenbudHuneke}, this implies that
\[
\lim_{s\to\infty}\depth S/I^s= n-\ell(I).
\]
We obtain the following consequence:

\begin{Corollary}
 $\lim_{s\to\infty}\depth S/I_{c,(n,d,t)}^s=0$ if and only if condition (ii) of  Theorem~\ref{hardwork} holds, or $c=d$ and condition (iii) of   Theorem~\ref{hardwork} holds.
\end{Corollary}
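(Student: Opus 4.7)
The plan is to first reduce the question about $\lim_{s\to\infty}\depth S/I_{c,(n,d,t)}^s$ to a question about the analytic spread $\ell(I_{c,(n,d,t)})$, and then read off directly from Theorem~\ref{headache} when $\ell$ attains its maximal possible value $n$.

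By Corollary~\ref{ReesCM} the Rees algebra $\mathcal{R}(I_{c,(n,d,t)})$ is Cohen--Macaulay, hence by Huneke's theorem the associated graded ring is Cohen--Macaulay as well. As recalled in the paragraph preceding the corollary, the Eisenbud--Huneke formula \cite[Proposition 3.3]{EisenbudHuneke} then gives
\[
\lim_{s\to\infty}\depth S/I_{c,(n,d,t)}^s \;=\; n-\ell(I_{c,(n,d,t)}),
\]
so that the limit depth vanishes precisely when $\ell(I_{c,(n,d,t)}) = n$. It thus suffices to determine in which of the three cases of Theorem~\ref{headache} the equality $\ell = n$ occurs.

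Case~(ii) yields $\ell = n$ immediately, which accounts for the first disjunct in the claim. In case~(iii) with $r = c$, the formula $\ell = n - d/c + 1$ equals $n$ exactly when $d/c = 1$, i.e., when $c = d$; this accounts for the second disjunct. The remaining work is to rule out $\ell = n$ in case~(i) and in case~(iii) with $r < c$. For case~(i), I would invoke Proposition~\ref{complete} to identify $\Gamma$ with the disjoint union of $d$ complete graphs on the intervals $[\alpha_i, a_i]$; the hypothesis $a_i < \alpha_{i+1}$ characterizing case~(i) via Proposition~\ref{dcomp} leaves genuine gaps between consecutive intervals, forcing $|V(\Gamma)| < n$, and Lemma~\ref{better} then yields $\ell = |V(\Gamma)| - d + 1 < n$ whenever $d \geq 2$ (the degenerate $d = 1$ reduces to $I = \mm$, which is already in case~(ii)). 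For case~(iii) with $r < c$, I would use Lemma~\ref{delta_i} to observe that $\delta_{\min} = -1$ can only be attained for indices $i \in [jc+r+1,(j+1)c-1]$ with $0 \le j \le k-1$, which forces $k \geq 1$; hence $(r+1)k \geq 1$ and the formula in Theorem~\ref{headache}(iii) gives $\ell = \nu - (r+1)k < \nu \leq n$.

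The main obstacle is the combinatorial bookkeeping in case~(i): one has to confirm that the disjoint vertex intervals $[\alpha_i, a_i]$ genuinely fail to exhaust $[1, n]$, which follows cleanly from the explicit formulas in Theorem~\ref{maxelem} combined with the case~(i) hypothesis $a_i < \alpha_{i+1}$. Beyond this, no new ingredients are needed; the proof is essentially a systematic elimination of the cases which cannot realize $\ell = n$, using material already assembled in Theorem~\ref{headache}, Proposition~\ref{complete}, and Lemma~\ref{delta_i}.
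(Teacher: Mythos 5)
Your proposal is correct and follows essentially the same route as the paper: both reduce the statement to deciding when $\ell(I_{c,(n,d,t)})=n$ via the Cohen--Macaulayness of the Rees algebra and the Eisenbud--Huneke formula, and then settle this by the case analysis of Theorem~\ref{hardwork}/Theorem~\ref{headache}. The only cosmetic difference is that the paper passes through Corollary~\ref{connected} ($\ell=n$ iff $\Gamma$ is connected) and counts components, whereas you plug directly into the formulas of Theorem~\ref{headache}; your extra checks in cases (i) and (iii) with $r<c$ are sound.
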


\begin{proof}
By Corollary~\ref{ReesCM}, the Rees algebra of $I_{c,(n,d,t)}$ is Cohen-Macaulay. Therefore  $\lim_{s\to\infty}\depth S/I^s=0$ if and only if $\ell(I_{c,(n,d,t)})=n$. By Corollary~\ref{connected},  this is the case if and only if $\Gamma$ is connected.  Thus  the desired conclusion follows from Theorem~\ref{hardwork}.
\end{proof}

\section{$t$-spread Veronese ideals of bounded block type}\label{tVeroblock}

In this section we study properties of $t$-spread Veronese ideals of bounded block type. Recall that $I_{(n,d,t),k}$ is the $t$-spread  monomial ideal in $n$ variables generated in degree $d$ with at most $k$ blocks. In particular,  $I_{(n,d,0),k}$ is the ideal in $n$ variables generated in degree $d$  whose generators have  support with  cardinality at most  $k$.

We have $k\leq n$, and if $k=n$, then $I_{(n,d,t),k}=I_{n,d,t}$. Moreover, $I_{(n,d,0),1}=(x_1^d,\ldots,x_n^d)$. In the following we assume that $1<k<n$ and $d\geq 2$.

The next result provides a description of the ideals $I_{(n,d,0),k}$ and has the consequence that for arbitrary $t$ the ideal  $(I_{(n,d,t),k})$ can be obtained by an iterated application of the Kalai shifting operator.
\begin{Lemma}\label{sig}
We have
\begin{enumerate} [{\em(i)}]
\item $I_{(n,d,0),k} = \sum _{i_1 \leq i_2 \leq \ldots \leq i_k} (x_{i_1}, \ldots, x_{i_k})^d$, and
\item $(I_{(n,d,t),k})^{\sigma}=I_{(n+(d-1),d,t+1),k}$.
\end{enumerate}
\end{Lemma}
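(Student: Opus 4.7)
For part (i), I would verify a set equality of generators. The containment $\supseteq$ is immediate: any monomial in $(x_{i_1},\ldots,x_{i_k})^d$ of degree $d$ has support contained in the set of distinct values among $\{i_1,\ldots,i_k\}$, hence of cardinality at most $k$, so it lies in $I_{(n,d,0),k}$. For the reverse containment, given a minimal generator $u\in G(I_{(n,d,0),k})$ with $\supp(u)=\{j_1<\cdots<j_m\}$ and $m\leq k$, I would pad the indices by setting $i_\ell=j_\ell$ for $\ell\leq m$ and $i_\ell=j_m$ for $m<\ell\leq k$. This gives a weakly increasing sequence $i_1\leq i_2 \leq \cdots \leq i_k$ with $\supp(u)\subseteq\{i_1,\ldots,i_k\}$, so $u\in (x_{i_1},\ldots,x_{i_k})^d$, as required.

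For part (ii), I would piggyback on Proposition~\ref{shift}(i). The key observation, which is implicit in the proof of that proposition, is that the Kalai stretching operator $\sigma$ preserves the number of blocks in the unique block decomposition: if $A\in\MA_{n,d,t}$ has block decomposition $B_1\sqcup\cdots\sqcup B_r$, then $A^\sigma\in\MA_{n+d-1,d,t+1}$ has block decomposition $C_1\sqcup\cdots\sqcup C_r$ with $|B_j|=|C_j|$ for all $j$. In particular, $A$ has at most $k$ blocks if and only if $A^\sigma$ does. Noting that the proof of Proposition~\ref{shift}(i) does not actually use the $c$-bound beyond the fact that $\sigma$ preserves block sizes, the same argument gives a bijection $\sigma\colon \MA_{n,d,t}\to\MA_{n+d-1,d,t+1}$. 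Restricting this bijection via the block-count preservation yields a bijection $\MA_{(n,d,t),k}\leftrightarrow \MA_{(n+d-1,d,t+1),k}$, which is precisely the statement that the generating sets on the two sides of (ii) correspond under $\sigma$.

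The main obstacle is not computational but one of bookkeeping: one must explicitly separate, in the proof of Proposition~\ref{shift}(i), the statement that $\sigma$ takes $t$-spread multisets to $(t+1)$-spread multisets from the statement that $\sigma$ preserves the block decomposition data. Both are proved there together (the latter is what guarantees the $c$-bound is preserved), so no new work is required; one simply reuses the same identification to preserve the $\leq k$ bound on block count instead of the $\leq c$ bound on block size.
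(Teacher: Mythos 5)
Your proposal is correct and follows essentially the same route as the paper: part (i) is the same set-of-generators identification (which the paper dismisses as ``obvious by definition''), and part (ii) reuses exactly the observation from the proof of Proposition~\ref{shift}(i) that $\sigma$ preserves the block decomposition (number and sizes of blocks) and is inverted by $\tau$, which is precisely the argument the paper writes out inline.
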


\begin{proof}
(i) obvious by definition.

(ii) Let $u\in I_{(n,d,t),k}$, $u=x_{A}, A=\{i_1\leq \dots \leq i_d\}$. Since $A$ is $t$-spread, it follows that $A^{\sigma}$ is $t+1$-spread. Moreover, if $B_1\sqcup \dots \sqcup B_r$, is the block decomposition of $A$, then the block decomposition $A^{\sigma}$ is  $C_1\sqcup\cdots \sqcup C_r$ such that if $B_j=\{i_k \leq i_{k+1} \leq \ldots \leq {i_l}\}$ then $C_j= \{i_k+k-1 < i_{k+1}+k < \ldots < i_l+l-1\}$.  Thus, $u^{\sigma}\in I_{(n+(d-1), d, t+1),k}$. Similarly one shows that if $ v\in I_{(n+(d-1), d, t+1),k} $, then $u=v^{\tau}\in I_{(n, d, t),k}$. Since $u^{\sigma}=v$, this completes the proof.

\end{proof}

The following result is a consequence of Lemma~\ref{sig}(ii).
\begin{Corollary}
$I_{(n,d,t),k}=(I_{(n-(d-1)t,d,0),k})^{\sigma^t}$.
\end{Corollary}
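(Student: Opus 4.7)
The plan is a straightforward induction on $t$ using Lemma~\ref{sig}(ii) as the engine. The base case $t=0$ is a tautology, since $\sigma^{0}$ is the identity and the equation reduces to $I_{(n,d,0),k}=I_{(n-(d-1)\cdot 0,d,0),k}$.

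For the inductive step, assume the statement holds for $t-1$, that is,
\[
I_{(n',d,t-1),k}=(I_{(n'-(d-1)(t-1),d,0),k})^{\sigma^{t-1}}
\]
for every admissible $n'$. Apply this with $n'=n-(d-1)$, so that $n'-(d-1)(t-1)=n-(d-1)t$, and conclude
\[
I_{(n-(d-1),d,t-1),k}=(I_{(n-(d-1)t,d,0),k})^{\sigma^{t-1}}.
\]
Now apply $\sigma$ to both sides. By Lemma~\ref{sig}(ii), $\sigma$ takes the left-hand side to $I_{(n,d,t),k}$, while on the right it simply increments the exponent of $\sigma$ by one, yielding $(I_{(n-(d-1)t,d,0),k})^{\sigma^{t}}$. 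This closes the induction.

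There is no real obstacle here; the only thing to be careful about is that $\sigma$ really has been defined as an operator on ideals (by acting on each generator via $u\mapsto u^{\sigma}$), so that the identity $(J^{\sigma^{t-1}})^{\sigma}=J^{\sigma^{t}}$ holds by definition, and that the indices of the $n$-parameter match up correctly when applying Lemma~\ref{sig}(ii) at each step. Both are immediate from the definitions given just before Proposition~\ref{shift} and from part (ii) of Lemma~\ref{sig}.
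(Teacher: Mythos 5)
Your proof is correct and matches the paper's approach: the paper states this corollary as an immediate consequence of Lemma~\ref{sig}(ii) obtained by iterating that lemma $t$ times, which is exactly the induction you carry out (with the parameters correctly tracked so that $n'-(d-1)(t-1)=n-(d-1)t$). You have simply written out the details the paper leaves implicit.
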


\begin{Proposition}
The number of generators of $I_{(n,d,0),k}$ is
\[
\mu(I_{(n,d,0),k})= \sum_{i=1}^{k} {d-1 \choose i-1}{n\choose i}.
\]
\end{Proposition}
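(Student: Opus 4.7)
The plan is a direct combinatorial count, partitioning the minimal generators by the size of their support.

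First I would recall from the definition that a monomial $u = x_1^{c_1}\cdots x_n^{c_n}$ of total degree $d$ lies in $G(I_{(n,d,0),k})$ if and only if $|\supp(u)| \leq k$, and each such monomial is indeed a minimal generator (two distinct monomials of the same degree never divide one another). So $\mu(I_{(n,d,0),k})$ equals the number of monomials of degree $d$ in $x_1,\ldots,x_n$ whose support has cardinality at most $k$.

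Next I would stratify this count according to the exact support size $i$, for $i = 1, 2, \ldots, k$. To specify such a monomial, I first choose the support $S \subset [n]$ with $|S|=i$, contributing a factor $\binom{n}{i}$. Then I must choose exponents $c_j \geq 1$ for $j \in S$ with $\sum_{j \in S} c_j = d$; by the standard stars-and-bars count (the number of compositions of $d$ into exactly $i$ positive parts), this contributes $\binom{d-1}{i-1}$.

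Summing over $i$ from $1$ to $k$ yields
\[
\mu(I_{(n,d,0),k}) = \sum_{i=1}^{k}\binom{n}{i}\binom{d-1}{i-1},
\]
as claimed. There is no real obstacle here; the only thing to be careful about is the convention that $k \leq n$ (already assumed), and that one uses support size \emph{exactly} $i$ in each stratum so that the choice of support and the choice of positive exponents on it are independent.
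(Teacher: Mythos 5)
Your proof is correct and follows exactly the same route as the paper's: the paper simply states that the number of degree-$d$ monomials in $n$ variables with support of cardinality exactly $i$ is $\binom{d-1}{i-1}\binom{n}{i}$ and sums over $i$, which is precisely your stratification-by-support-size plus stars-and-bars argument spelled out in more detail.
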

\begin{proof}
The number of monomials in $n$ variables of degree $d$ whose support have cardinality $i$ is ${d-1 \choose i-1}{n\choose i}$. Thus the desired formula follows.
\end{proof}

\begin{Theorem}
\label{regblock}
\[
\reg (I_{(n,d,0),k})= (n-k)\left\lfloor \frac{d-1}{k} \right\rfloor +d-1.
\]
\end{Theorem}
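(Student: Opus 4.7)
The plan is to compute the regularity of $I := I_{(n,d,0),k}$ by determining the socle degree of $S/I$. Since $x_i^d \in I$ for every $i \in [n]$ (the support of $x_i^d$ has size $1 \leq k$), the ideal $I$ is $\mm$-primary, so $S/I$ is Artinian; hence, as already noted in the introduction, the regularity is determined by the socle degree of $S/I$, namely the largest integer $m$ such that $(S/I)_m \neq 0$, which coincides with the largest degree of a monomial not in $I$.

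First I would characterize the monomials avoiding $I$. A monomial $u = x_1^{a_1} \cdots x_n^{a_n}$ lies in $I$ if and only if there exist integers $0 \leq b_i \leq a_i$ with $\sum_i b_i = d$ and $|\{i : b_i > 0\}| \leq k$; equivalently, if and only if there is a subset $T \subseteq [n]$ with $|T| \leq k$ and $\sum_{i \in T} a_i \geq d$. Therefore $u \notin I$ precisely when the sum of the $k$ largest values among $a_1, \ldots, a_n$ is at most $d-1$.

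Next I would carry out the optimization: maximize $\deg u = \sum_i a_i$ subject to this constraint. Sorting so that $a_1 \geq a_2 \geq \cdots \geq a_n$ and setting $q := \lfloor (d-1)/k \rfloor$, the constraint $a_1 + \cdots + a_k \leq d-1$ forces $a_k \leq (d-1)/k$, hence $a_k \leq q$ by integrality, and therefore $a_i \leq q$ for every $i \geq k$. Consequently,
\[
\deg u \;=\; (a_1 + \cdots + a_k) + (a_{k+1} + \cdots + a_n) \;\leq\; (d-1) + (n-k)q.
\]
For the matching lower bound, write $d - 1 = kq + r$ with $0 \leq r < k$ and consider
\[
u_0 \;:=\; x_1^{q+1} \cdots x_r^{q+1}\, x_{r+1}^q \cdots x_n^q.
\]
Its top $k$ exponents sum to $r(q+1) + (k-r)q = d - 1$, so $u_0 \notin I$, while $\deg u_0 = r(q+1) + (n-r)q = (n-k)q + d - 1$, matching the upper bound.

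The only real obstacle is the routine bookkeeping in the optimization: the crux is the observation that the constraint on the sum of the top $k$ exponents forces the $k$-th largest exponent to be at most $\lfloor (d-1)/k \rfloor$, after which both the upper bound and the explicit extremal monomial follow immediately.
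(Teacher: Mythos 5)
Your proposal is correct and follows essentially the same route as the paper: both reduce the computation to the top socle degree of the Artinian quotient $S/I_{(n,d,0),k}$, characterize the monomials outside the ideal by the condition that the sum of the $k$ largest exponents is at most $d-1$, and then solve the resulting integer optimization (the paper phrases this via a polytope and a linear functional, but the bound $a_k\leq\lfloor (d-1)/k\rfloor$ and the matching extremal monomial are the same mechanism). Your explicit extremal monomial $u_0$ and direct upper bound are, if anything, a slightly cleaner write-up of the paper's bookkeeping.
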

\begin{proof}

Note that $\dim S/I_{(n,d,0),k}=0$, since $x_i^d\in I_{(n,d,0),k}$ for $i=1,\ldots,n$. Therefore, the largest degree of a monomial in $S\setminus I_{(n,d,0),k}$ gives us the regularity of $I_{(n,d,0),k}$.

Let $I=\{i_1, \dots, i_l\}\subset [n]$, $l\leq k$. Denote by $a_{I}=a_{i_1}+\dots+a_{i_l}$.
Let us consider the polytope

\[
\mathcal{P}= \{(a_1, \dots, a_n)\in \mathbb{R}^{n}: a_i\geq 0, a_{I}\leq d-1 \text{ for all } I\subset [n], |I|\leq k\}.
\]
Note that $x_1^{a_1}\cdots x_n^{a_n}\not \in I_{(n,d,0),k}$ if and only if $(a_1,\ldots,a_n)\in \mathcal{P}$. Thus,
in order to find the maximal degree of the socle elements, we have to find an integer point $a=(a_1, \dots, a_n) \in \mathcal{P}$ for which the function $f(a_1, \dots, a_n)=a_1+\dots+a_n$ takes the maximal value. By symmetry, we can assume that $a_1\geq a_2 \geq \dots \geq a_n$. Then $a\in \mathcal{P}$ if and only if $a_1+a_2+\dots+a_k\leq d-1$ and $a_i \geq 0$ for all $i$. In particular, if $a\in \mathcal{P}$, then $a^{\prime}=(a_1,\dots, a_{k-1}, a_k, \dots, a_k)\in \mathcal{P}$ and $f(a^{\prime})\geq f(a)$. Therefore, in order to find the maximal value for $f$, we may assume that $a=(a_1,\dots, a_{k-1}, a_k,\dots, a_k)$. For such $a$, we have that $f(a)=a_1+\dots+a_{k-1}+ (n-k+1)a_k$. Now, our problem can be reformulated as follows: we are looking for $a_1, \dots, a_k$ with the following conditions: find the maximal value of the linear function $$g(a_1,\dots, a_k)=a_1+\dots+a_{k-1}+(n-k+1)a_k$$
for the  integer points $(a_1,\dots, a_k)$ satisfying
\begin{enumerate}[{(i)}]
\item $a_1\geq a_2\geq \dots \geq a_k>0$,
\item $a_1+a_2+\dots +a_k\leq d-1$.
\end{enumerate}
Now let $a_k=i$ with $1\leq i\leq d-1$. By (i) and (ii), we obtain that $ki\leq d-1$. This implies that $1\leq i\leq \left\lfloor \frac{d-1}{k}\right \rfloor$. Let $h_i= (n-k)i+(d-1)\geq g(a_1, \dots, a_{k-1}, i)$. So for each $i$, $g$ can take at most the value $h_i$. Note that $h_1<h_2<\dots < h_{\left\lfloor \frac{d-1}{k}\right \rfloor}$. We will show that $g(a_1,\dots, a_{k-1},\left\lfloor \frac{d-1}{k} \right\rfloor)$ attains the value $$h_{\left\lfloor \frac{d-1}{k}\right \rfloor}= (n-k)\left\lfloor \frac{d-1}{k}\right \rfloor +(d-1).$$ By definition, $g(a_1, \dots, a_{k-1},\left\lfloor \frac{d-1}{k} \right\rfloor)= a_1+\dots +a_{k-1}+\left\lfloor \frac{d-1}{k}\right \rfloor+(n-k)\left\lfloor \frac{d-1}{k} \right\rfloor$. Indeed, let $a_2=\dots =a_{k-1}=\left\lfloor \frac{d-1}{k}\right \rfloor$ and $a_1=(d-1)-(k-1)\left\lfloor \frac{d-1}{k}\right \rfloor$.
\end{proof}

\begin{Proposition} \label{k=2}
$I^{n-1}_{(n,d,0),2}= \mathbf{m}^{(n-1)d}$.
\end{Proposition}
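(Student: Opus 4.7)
One inclusion $I_{(n,d,0),2}^{n-1}\subseteq \mathbf{m}^{(n-1)d}$ is immediate since every generator of $I_{(n,d,0),2}$ has degree $d$. For the reverse inclusion, the plan is to prove by induction on $n$ the statement: every monomial $u\in S$ of degree $(n-1)d$ factors as a product of $n-1$ generators of $I_{(n,d,0),2}$, i.e., as a product of $n-1$ degree-$d$ monomials each supported on at most two variables. The base case $n=2$ is trivial, since $I_{(2,d,0),2}=(x_1,x_2)^d=\mathbf{m}^d$.

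For the inductive step, write $u=x_1^{a_1}\cdots x_n^{a_n}$ and relabel so that $a_1\leq a_2\leq\cdots\leq a_n$. The crucial averaging inequality that I would establish first is
\[
a_1+a_n\geq d.
\]
This follows because $\sum_{i=2}^{n}a_i=(n-1)d-a_1$ and each term is at most $a_n$, giving $(n-1)a_n\geq (n-1)d-a_1$, which rearranges to $a_1+a_n\geq d+a_1(n-2)/(n-1)\geq d$. An analogous averaging argument (applied to $a_1$) gives $a_1\leq(n-1)d/n<d$.

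With these inequalities, the plan is to split off a single generator of $I_{(n,d,0),2}$ that uses up all of the $x_1$ in $u$. Set $v:=x_1^{a_1}x_n^{d-a_1}$ when $a_1\geq 1$, and $v:=x_n^d$ when $a_1=0$ (in which case $a_1+a_n\geq d$ forces $a_n\geq d$, so $v$ still divides $u$). In either case $v$ has degree $d$, support of size at most two, and divides $u$, hence $v\in G(I_{(n,d,0),2})$. The quotient $u/v$ then lies in $K[x_2,\ldots,x_n]$ and has degree $(n-2)d$, so by the induction hypothesis applied in $n-1$ variables, $u/v\in I_{(n-1,d,0),2}^{\,n-2}$; since every generator of the latter ideal (viewed in $K[x_2,\ldots,x_n]$) is also a generator of $I_{(n,d,0),2}$ in $S$, we conclude $u=v\cdot (u/v)\in I_{(n,d,0),2}^{\,n-1}$.

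The main obstacle is really just the key inequality $a_1+a_n\geq d$; once it is in hand, the factorization and the induction are forced.
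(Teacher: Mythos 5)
Your proposal is correct and follows essentially the same route as the paper: induction on $n$, the key averaging inequality that the smallest and largest exponents sum to at least $d$, and splitting off one degree-$d$ generator supported on those two variables so as to exhaust the variable of smallest exponent. The only cosmetic differences are the ordering convention and that you derive the inequality directly rather than by contradiction.
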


\begin{proof}
The inclusion $I^{n-1}_{(n,d,0),2}\subseteq \mathbf{m}^{(n-1)d}$ is trivial. Conversely,  let $w=x_1^{a_1}\cdots x_n^{a_n} \in \mathbf{m}^{(n-1)d}$. Thus $\sum_{i=1}^n a_i =(n-1)d$. We show that $w$ can be written as a product of $n-1$ monomials of degree $d$ whose  support has cardinality  at most $2$.  We prove this by induction on $n$. We may assume $a_1\geq a_2\geq \dots \geq a_n$. It is clear that $a_n<d$, because otherwise $\sum_{i=1}^n a_i >(n-1)d$.

Now, if $a_1+a_n<d$, then $a_i+a_n<d$ for all $2\leq i\leq n-1$, hence $\sum_{i=1}^{n-1}a_i+(n-1)a_n<(n-1)d$, which is not possible since $n>2$, by assumption. Thus, $a_1+a_n\geq d$, therefore $a_1-(d-a_n)\geq 0$. We can write
\[
w=(x_1^{d-a_n}x_n^{a_n})(x_1^{a_1-(d-a_n)}x_2^{a_2}\cdots x_{n-1}^{a_{n-1}}).
\]
Denote by $w^{\prime}=x_1^{a_1-(d-a_n)}x_2^{a_2}\cdots x_{n-1}^{a_{n-1}}$. It is clear that $w^{\prime}$ has degree $(n-2)d$ and $|\supp (w^{\prime})|\leq n-1$. By induction hypothesis $w'\in I^{n-2}_{(n-1,d,0),2}$. This implies that $w\in I^{n-1}_{(n,d,0),2}$.
\end{proof}

One can ask in general whether a power of a $0$-spread Veronese ideal of bounded block type is a power of the maximal ideal. This is indeed the case,  since we have the following inclusions
\[
(\mathbf{m}^d)^{n-1} \supseteq I^{n-1}_{(n,d,0),k} \supseteq I^{n-1}_{(n,d,0),2}= (\mathbf{m}^d)^{n-1},
\]
where the last equality is given by  Proposition \ref{k=2}. Therefore,  $I^{n-1}_{(n,d,0),k}=(\mathbf{m}^d)^{n-1}$.  Hence the smallest integer $j$  for which $I^{j}_{(n,d,0),k}=(\mathbf{m}^d)^{j}$ is bounded by $n-1$.

To know the smallest integer  $j$  for which
$I^j_{(n,d,0),k}= \mathbf{m}^{jd}$ is of interest by the following result.

\begin{Proposition}
\label{fibercone} Let $j$ the smallest integer such that  $I^m_{(n,d,0),k}= \mathbf{m}^{md}$. Then
\begin{eqnarray*}
\reg(K[I_{(n,d,0), k}])
&=&\max \{j-1, \reg(S^{(d)})\}\\
&=&\max\{j-1, n-\left\lceil \frac{n}{d}\right\rceil \}.
\end{eqnarray*}
\end{Proposition}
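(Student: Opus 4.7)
The plan is to compare $K[I]$ with the full $d$-th Veronese subring $S^{(d)} = K[\mathbf{m}^d]$ via the graded inclusion $K[I] \subseteq S^{(d)}$, and to read the regularity of $K[I]$ out of the short exact sequence
$$0 \to K[I] \to S^{(d)} \to N \to 0, \qquad N := S^{(d)}/K[I].$$
In the fiber-cone grading one has $(K[I])_m = (I^m)_{md}$ and $(S^{(d)})_m = S_{md}$, so $N_m = 0$ is precisely equivalent to $I^m = \mathbf{m}^{md}$. Together with the monotonicity $I^j = \mathbf{m}^{jd} \Rightarrow I^m = \mathbf{m}^{md}$ for all $m \geq j$, this forces $N$ to be a graded module of finite length with $N_m \ne 0$ for $m$ ranging over a set whose largest element is $j-1$.

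Next I would apply local cohomology with respect to the graded maximal ideal $\mathbf{M}$ to the above short exact sequence. Since $S^{(d)}$ is Cohen--Macaulay of Krull dimension $n$, $H^i_{\mathbf{M}}(S^{(d)}) = 0$ for $i \ne n$. Since $N$ is Artinian, $H^0_{\mathbf{M}}(N) = N$ and $H^i_{\mathbf{M}}(N) = 0$ for $i \geq 1$. The long exact sequence collapses to
$$H^0_{\mathbf{M}}(K[I]) = 0, \quad H^1_{\mathbf{M}}(K[I]) \cong N, \quad H^i_{\mathbf{M}}(K[I]) = 0 \ \text{for } 2 \leq i \leq n-1, \quad H^n_{\mathbf{M}}(K[I]) \cong H^n_{\mathbf{M}}(S^{(d)}).$$
The Castelnuovo--Mumford formula $\reg(M) = \max_i \{a_i(M) + i\}$ then picks up exactly two non-trivial contributions: the one from $H^1_{\mathbf{M}}(K[I]) \cong N$, which is governed by the top degree of $N$, and the one from $H^n_{\mathbf{M}}(K[I]) \cong H^n_{\mathbf{M}}(S^{(d)})$, which is $\reg(S^{(d)})$. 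This yields the asserted formula $\reg(K[I]) = \max\{j-1,\reg(S^{(d)})\}$.

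For the second identity in the conclusion I would quote the classical computation $\reg(S^{(d)}) = n - \lceil n/d \rceil$, obtained from the identification of $\omega_{S^{(d)}}$ as the $d$-th Veronese of $\omega_S = S(-n)$, whose lowest-degree generator lies in degree $\lceil n/d \rceil$ of the fiber-cone grading, so that $a(S^{(d)}) = -\lceil n/d \rceil$.

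The most delicate step is the persistence/monotonicity claim: once $I^j = \mathbf{m}^{jd}$, the same equality persists for all larger $m$. This is a combinatorial factorization statement, essentially the same one underlying Proposition~\ref{k=2}: every degree-$md$ monomial $w$ of $S$ must admit a decomposition into $m$ degree-$d$ monomials, each with support of cardinality at most $k$. The natural approach is induction on $m-j$, peeling off a degree-$d$ factor of $w$ whose support has size $\leq k$ (always available either by extracting $x_i^d$ when some variable of $w$ has exponent $\geq d$, or by choosing the $k$ variables of largest exponent in $w$ once $m$ is large enough that their exponents sum to at least $d$), and then applying the inductive hypothesis to the complementary degree-$(m-1)d$ factor.
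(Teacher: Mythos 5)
Your argument is, in essence, the paper's own: the same inclusion $K[I_{(n,d,0),k}]\subseteq S^{(d)}$, the same short exact sequence with the same collapse of the long exact local cohomology sequence using the Cohen--Macaulayness of $S^{(d)}$, and the same computation $a(S^{(d)})=-\lceil n/d\rceil$ via $(\omega_{S^{(d)}})_i=(\omega_S)_{id}$. One genuine addition on your side: the persistence statement ($I^j=\mathbf{m}^{jd}$ implies $I^m=\mathbf{m}^{md}$ for all $m\geq j$) is used silently by the paper, since it is exactly what makes $N=S^{(d)}/K[I]$ a finite length module whose top nonvanishing degree is $j-1$; you are right to isolate it. Your peeling sketch does close this gap: for $d\leq k$ one has $I=\mathbf{m}^d$ and $j=1$, while for $d>k$ any factorization of $(x_1\cdots x_n)x_1^{jd-n}$ into $j$ generators of $I$ forces $jk\geq n$, so for every $m\geq j$ the $k$ largest exponents of a degree-$(m+1)d$ monomial with all exponents below $d$ sum to at least $k(m+1)d/n\geq d$, and a degree-$d$ factor of support at most $k$ can always be split off.

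There is, however, one step where your write-up (like the paper's) is too quick, and it affects the displayed formula. With $\reg(M)=\max_i\{a_i(M)+i\}$, the contribution of $H^1_{\mm}(K[I])\cong N$ is the top degree of $N$ \emph{plus one}, that is $(j-1)+1=j$, not $j-1$. In every example the paper discusses this is invisible because there $j\leq n-\lceil n/d\rceil$; but take $n=4$, $d=3$, $k=2$: the monomial $x_1^3x_2x_3x_4$ cannot be written as a product of two degree-$3$ monomials of support at most $2$, while $I^3=\mathbf{m}^9$ by Proposition~\ref{k=2}, so $j=3$ and $N_2\neq 0$, giving $\reg K[I]\geq 2+1=3$, whereas the stated formula returns $\max\{2,2\}=2$. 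So the local cohomology argument you (and the paper) run actually yields $\reg(K[I])=\max\{j,\,n-\lceil n/d\rceil\}$; you should not assert that it ``yields the asserted formula'' without confronting this shift by the cohomological index.
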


\begin{proof}
Let $S^{(d)}$ be the $d$th Veronese subring of $S=K[x_1,\dots, x_n]$. Set $R=K[I_{(n,d,0), k}]$. Then $R\subset S^{(d)}$. Consider the short exact sequence
\[
0 \to R \to S^{(d)} \to S^{(d)}/R \to 0,
\]
and its induced long exact cohomology sequence
\[
\cdots \to H_{\mm}^0(S^{(d)}/R) \to H_\mm^1(R) \to H_\mm^1(S^{(d)}) \to \cdots
\]
We have that $\dim (S^{(d)})=n$ and
\[
H_\mm^i(S^{(d)})=0 \text{ for all } i\neq n.
\]
Thus, we obtain that
\[
H_\mm^i(R)= \left\{
\begin{array}{lll}
      H_\mm^0(S^{(d)}/R), & \text{ if } i=1,\\
      H_\mm^n(S^{(d)}), & \text{ if } i=n,\\
      0, & \text{ if } i\neq 1, n.
\end{array}
\right.
\]

Thus, $\reg (R)=\max\{j-1, \reg(S^{(d)})\}$. Since $S^{(d)}$ is Cohen Macaulay of dimension $n$, it follows that $\reg{S^{(d)}}=a(S^{(d)})+n$, where $a(S^{(d)})$ is the $a$-invariant of $S^{(d)}$. On the other hand,
\[
a(S^{(d)})=-\min\{i \:\; (\omega_{S^{(d)}})_i \neq 0\},
\]
here $\omega_{S^{(d)}}$ denotes the canonical module of $S^{(d)}$.

Since $(\omega_{S^{(d)}})_i= (\omega_{S})_{id}$ ( \cite[Exercise 3.6.21]{bh}) and since $\omega_S= S(-n)$, it follows that
\[
a(S^{(d)})=-\min\{i \:\; S_{id-n} \neq 0\}= -\left\lceil \frac{n}{d}\right\rceil.
\]
This yields the desired conclusion.
\end{proof}

We expect that $R=K[I_{(n,d,0),k}]$ has quadratic relations. Indeed, if $k=1$, then $R$ is a polynomial ring and if $k=n$, then $R$ is the Veronese algebra which is known to have quadratic relations, see for example \cite[Proposition 6.11 and Theorem 6.16]{EH}. So, if $n=3$ only the case $k=2$ is of interest. In this case, if $d=3$, then $R$ is the pinched Veronese which is known to be even Koszul, see \cite{caviglia}. Another case that we could check with the computer is $n=4, d=2, k=2$,  and this case  $R$ is Gorenstein with quadratic relations, and by our formula, $\reg(R)=2$.

\end{document}